\newtheorem{theorem}{Theorem}[section]
\newtheorem{axiom}[theorem]{Axiom}
\newtheorem{conjecture}[theorem]{Conjecture}
\newtheorem{corollary}[theorem]{Corollary}
\newtheorem{definition}[theorem]{Definition}
\newtheorem{example}[theorem]{Example}
\newtheorem{exercise}[theorem]{Exercise}
\newtheorem{lemma}[theorem]{Lemma}
\newtheorem{notation}[theorem]{Notation}
\newtheorem{proposition}[theorem]{Proposition}
\newtheorem{remark}[theorem]{Remark}
\newenvironment{proof}[1][Proof]{\noindent\textbf{#1.} }{\ \rule{0.5em}{0.5em}}
\let\pdfoutput=\undefined\fi
\chardef\@x10\chardef\@xv60
\def\tcitime{
\def\@time{%
  \@minute\time\@hour\@minute\divide\@hour\@xv
  \ifnum\@hour<\@x 0\fi\the\@hour:%
  \multiply\@hour\@xv\advance\@minute-\@hour
  \ifnum\@minute<\@x 0\fi\the\@minute
  }}%
\def\x@hyperref#1#2#3{%
   \catcode`\~ = 12
   \catcode`\$ = 12
   \catcode`\_ = 12
   \catcode`\# = 12
   \catcode`\& = 12
   \catcode`\% = 12
   \y@hyperref{#1}{#2}{#3}%
}
\def\y@hyperref#1#2#3#4{%
   #2\ref{#4}#3
   \catcode`\~ = 13
   \catcode`\$ = 3
   \catcode`\_ = 8
   \catcode`\# = 6
   \catcode`\& = 4
   \catcode`\% = 14
}
\def\QCTOpt[#1]#2{%
  \def\QCTOptB{#1}
  \def\QCTOptA{#2}
}
\def\QCTNOpt#1{%
  \def\QCTOptA{#1}
  \let\QCTOptB\empty
}
\def\Qct{%
  \@ifnextchar[{%
    \QCTOpt}{\QCTNOpt}
}
\def\QCBOpt[#1]#2{%
  \def\QCBOptB{#1}%
  \def\QCBOptA{#2}%
}
\def\QCBNOpt#1{%
  \def\QCBOptA{#1}%
  \let\QCBOptB\empty
}
\def\Qcb{%
  \@ifnextchar[{%
    \QCBOpt}{\QCBNOpt}%
}
\def\PrepCapArgs{%
  \ifx\QCBOptA\empty
    \ifx\QCTOptA\empty
      {}%
    \else
      \ifx\QCTOptB\empty
        {\QCTOptA}%
      \else
        [\QCTOptB]{\QCTOptA}%
      \fi
    \fi
  \else
    \ifx\QCBOptA\empty
      {}%
    \else
      \ifx\QCBOptB\empty
        {\QCBOptA}%
      \else
        [\QCBOptB]{\QCBOptA}%
      \fi
    \fi
  \fi
}
\def\GRAPHICSPS#1{%
 \ifcase\GRAPHICSTYPE
   \special{ps: #1}%
 \or
   \special{language "PS", include "#1"}%
 \fi
}%
\def\graffile#1#2#3#4{%
    \bgroup
	   \@inlabelfalse
       \leavevmode
       \@ifundefined{bbl@deactivate}{\def~{\string~}}{\activesoff}%
        \raise -#4 \BOXTHEFRAME{%
           \hbox to #2{\raise #3\hbox to #2{\null #1\hfil}}}%
    \egroup
}%
\def\draftbox#1#2#3#4{%
 \leavevmode\raise -#4 \hbox{%
  \frame{\rlap{\protect\tiny #1}\hbox to #2%
   {\vrule height#3 width\z@ depth\z@\hfil}%
  }%
 }%
}%
\let\nographics=\@msidraft
\newif\ifwasdraft
\def\GRAPHIC#1#2#3#4#5{%
   \ifnum\@msidraft=\@ne\draftbox{#2}{#3}{#4}{#5}%
   \else\graffile{#1}{#3}{#4}{#5}%
   \fi
}
\def\addtoLaTeXparams#1{%
    \edef\LaTeXparams{\LaTeXparams #1}}%
\newif\ifBoxFrame \BoxFramefalse
\newif\ifOverFrame \OverFramefalse
\newif\ifUnderFrame \UnderFramefalse
\def\BOXTHEFRAME#1{%
   \hbox{%
      \ifBoxFrame
         \frame{#1}%
      \else
         {#1}%
      \fi
   }%
}
\def\doFRAMEparams#1{\BoxFramefalse\OverFramefalse\UnderFramefalse\readFRAMEparams#1\end}%
\def\readFRAMEparams#1{%
 \ifx#1\end%
  \let\next=\relax
  \else
  \ifx#1i\dispkind=\z@\fi
  \ifx#1d\dispkind=\@ne\fi
  \ifx#1f\dispkind=\tw@\fi
  \ifx#1t\addtoLaTeXparams{t}\fi
  \ifx#1b\addtoLaTeXparams{b}\fi
  \ifx#1p\addtoLaTeXparams{p}\fi
  \ifx#1h\addtoLaTeXparams{h}\fi
  \ifx#1X\BoxFrametrue\fi
  \ifx#1O\OverFrametrue\fi
  \ifx#1U\UnderFrametrue\fi
  \ifx#1w
    \ifnum\@msidraft=1\wasdrafttrue\else\wasdraftfalse\fi
    \@msidraft=\@ne
  \fi
  \let\next=\readFRAMEparams
  \fi
 \next
 }%
\def\IFRAME#1#2#3#4#5#6{%
      \bgroup
      \let\QCTOptA\empty
      \let\QCTOptB\empty
      \let\QCBOptA\empty
      \let\QCBOptB\empty
      #6%
      \parindent=0pt
      \leftskip=0pt
      \rightskip=0pt
      \setbox0=\hbox{\QCBOptA}%
      \@tempdima=#1\relax
      \ifOverFrame
          \typeout{This is not implemented yet}%
          \show\HELP
      \else
         \ifdim\wd0>\@tempdima
            \advance\@tempdima by \@tempdima
            \ifdim\wd0 >\@tempdima
               \setbox1 =\vbox{%
                  \unskip\hbox to \@tempdima{\hfill\GRAPHIC{#5}{#4}{#1}{#2}{#3}\hfill}%
                  \unskip\hbox to \@tempdima{\parbox[b]{\@tempdima}{\QCBOptA}}%
               }%
               \wd1=\@tempdima
            \else
               \textwidth=\wd0
               \setbox1 =\vbox{%
                 \noindent\hbox to \wd0{\hfill\GRAPHIC{#5}{#4}{#1}{#2}{#3}\hfill}\\%
                 \noindent\hbox{\QCBOptA}%
               }%
               \wd1=\wd0
            \fi
         \else
            \ifdim\wd0>0pt
              \hsize=\@tempdima
              \setbox1=\vbox{%
                \unskip\GRAPHIC{#5}{#4}{#1}{#2}{0pt}%
                \break
                \unskip\hbox to \@tempdima{\hfill \QCBOptA\hfill}%
              }%
              \wd1=\@tempdima
           \else
              \hsize=\@tempdima
              \setbox1=\vbox{%
                \unskip\GRAPHIC{#5}{#4}{#1}{#2}{0pt}%
              }%
              \wd1=\@tempdima
           \fi
         \fi
         \@tempdimb=\ht1
         \advance\@tempdimb by -#2
         \advance\@tempdimb by #3
         \leavevmode
         \raise -\@tempdimb \hbox{\box1}%
      \fi
      \egroup%
}%
\def\DFRAME#1#2#3#4#5{%
  \vspace\topsep
  \hfil\break
  \bgroup
     \leftskip\@flushglue
	 \rightskip\@flushglue
	 \parindent\z@
	 \parfillskip\z@skip
     \let\QCTOptA\empty
     \let\QCTOptB\empty
     \let\QCBOptA\empty
     \let\QCBOptB\empty
	 \vbox\bgroup
        \ifOverFrame 
           #5\QCTOptA\par
        \fi
        \GRAPHIC{#4}{#3}{#1}{#2}{\z@}%
        \ifUnderFrame 
           \break#5\QCBOptA
        \fi
	 \egroup
  \egroup
  \vspace\topsep
  \break
}%
\def\FFRAME#1#2#3#4#5#6#7{%
  \@ifundefined{floatstyle}
    {
     \begin{figure}[#1]%
    }
    {
	 \ifx#1h
      \begin{figure}[H]%
	 \else
      \begin{figure}[#1]%
	 \fi
	}
  \let\QCTOptA\empty
  \let\QCTOptB\empty
  \let\QCBOptA\empty
  \let\QCBOptB\empty
  \ifOverFrame
    #4
    \ifx\QCTOptA\empty
    \else
      \ifx\QCTOptB\empty
        \caption{\QCTOptA}%
      \else
        \caption[\QCTOptB]{\QCTOptA}%
      \fi
    \fi
    \ifUnderFrame\else
      \label{#5}%
    \fi
  \else
    \UnderFrametrue%
  \fi
  \begin{center}\GRAPHIC{#7}{#6}{#2}{#3}{\z@}\end{center}%
  \ifUnderFrame
    #4
    \ifx\QCBOptA\empty
      \caption{}%
    \else
      \ifx\QCBOptB\empty
        \caption{\QCBOptA}%
      \else
        \caption[\QCBOptB]{\QCBOptA}%
      \fi
    \fi
    \label{#5}%
  \fi
  \end{figure}%
 }%
\def\makeactives{
  \catcode`\"=\active
  \catcode`\;=\active
  \catcode`\:=\active
  \catcode`\'=\active
  \catcode`\~=\active
}
   \gdef\activesoff{%
      \def"{\string"}%
      \def;{\string;}%
      \def:{\string:}%
      \def'{\string'}%
      \def~{\string~}%
    }
\def\FRAME#1#2#3#4#5#6#7#8{%
 \bgroup
 \ifnum\@msidraft=\@ne
   \wasdrafttrue
 \else
   \wasdraftfalse%
 \fi
 \def\LaTeXparams{}%
 \dispkind=\z@
 \def\LaTeXparams{}%
 \doFRAMEparams{#1}%
 \ifnum\dispkind=\z@\IFRAME{#2}{#3}{#4}{#7}{#8}{#5}\else
  \ifnum\dispkind=\@ne\DFRAME{#2}{#3}{#7}{#8}{#5}\else
   \ifnum\dispkind=\tw@
    \edef\@tempa{\noexpand\FFRAME{\LaTeXparams}}%
    \@tempa{#2}{#3}{#5}{#6}{#7}{#8}%
    \fi
   \fi
  \fi
  \ifwasdraft\@msidraft=1\else\@msidraft=0\fi{}%
  \egroup
 }%
\def\TEXUX#1{"texux"}
\def\limfunc#1{\mathop{\rm #1}}%
\long\def\QQQ#1#2{%
     \long\expandafter\def\csname#1\endcsname{#2}}%
\long\def\QQA#1#2{}%
\def\QTR#1#2{{\csname#1\endcsname {#2}}}%
\def\EXPAND#1[#2]#3{}%
\def\NOEXPAND#1[#2]#3{}%
\def\LaTeXparent#1{}%
\def\ChildStyles#1{}%
\def\ChildDefaults#1{}%
\def\QTagDef#1#2#3{}%
  \providecommand{\UNICODE}[2][]{\protect\rule{.1in}{.1in}}
  \providecommand{\U}[1]{\protect\rule{.1in}{.1in}}
\def\QQfnmark#1{\footnotemark}
 \def\abstract{%
  \if@twocolumn
   \section*{Abstract (Not appropriate in this style!)}%
   \else \small 
   \begin{center}{\bf Abstract\vspace{-.5em}\vspace{\z@}}\end{center}%
   \quotation 
   \fi
  }%
   \def\registered{\relax\ifmmode{}\r@gistered
                    \else$\m@th\r@gistered$\fi}%
 \def\r@gistered{^{\ooalign
  {\hfil\raise.07ex\hbox{$\scriptstyle\rm\text{R}$}\hfil\crcr
  \mathhexbox20D}}}}{}%
\newdimen\theight
\def\newfmtname{LaTeX2e}
  \DeclareOldFontCommand{\rm}{\normalfont\rmfamily}{\mathrm}
  \DeclareOldFontCommand{\sf}{\normalfont\sffamily}{\mathsf}
  \DeclareOldFontCommand{\tt}{\normalfont\ttfamily}{\mathtt}
  \DeclareOldFontCommand{\bf}{\normalfont\bfseries}{\mathbf}
  \DeclareOldFontCommand{\it}{\normalfont\itshape}{\mathit}
  \DeclareOldFontCommand{\sl}{\normalfont\slshape}{\@nomath\sl}
  \DeclareOldFontCommand{\sc}{\normalfont\scshape}{\@nomath\sc}
\def\alpha{{\Greekmath 010B}}%
\def\beta{{\Greekmath 010C}}%
\def\gamma{{\Greekmath 010D}}%
\def\delta{{\Greekmath 010E}}%
\def\epsilon{{\Greekmath 010F}}%
\def\zeta{{\Greekmath 0110}}%
\def\eta{{\Greekmath 0111}}%
\def\theta{{\Greekmath 0112}}%
\def\iota{{\Greekmath 0113}}%
\def\kappa{{\Greekmath 0114}}%
\def\lambda{{\Greekmath 0115}}%
\def\mu{{\Greekmath 0116}}%
\def\nu{{\Greekmath 0117}}%
\def\xi{{\Greekmath 0118}}%
\def\pi{{\Greekmath 0119}}%
\def\rho{{\Greekmath 011A}}%
\def\sigma{{\Greekmath 011B}}%
\def\tau{{\Greekmath 011C}}%
\def\upsilon{{\Greekmath 011D}}%
\def\phi{{\Greekmath 011E}}%
\def\chi{{\Greekmath 011F}}%
\def\psi{{\Greekmath 0120}}%
\def\omega{{\Greekmath 0121}}%
\def\varepsilon{{\Greekmath 0122}}%
\def\vartheta{{\Greekmath 0123}}%
\def\varpi{{\Greekmath 0124}}%
\def\varrho{{\Greekmath 0125}}%
\def\varsigma{{\Greekmath 0126}}%
\def\varphi{{\Greekmath 0127}}%
\def\nabla{{\Greekmath 0272}}
\def\FindBoldGroup{%
   {\setbox0=\hbox{$\mathbf{x\global\edef\theboldgroup{\the\mathgroup}}$}}%
}
\def\Greekmath#1#2#3#4{%
    \if@compatibility
        \ifnum\mathgroup=\symbold
           \mathchoice{\mbox{\boldmath$\displaystyle\mathchar"#1#2#3#4$}}%
                      {\mbox{\boldmath$\textstyle\mathchar"#1#2#3#4$}}%
                      {\mbox{\boldmath$\scriptstyle\mathchar"#1#2#3#4$}}%
                      {\mbox{\boldmath$\scriptscriptstyle\mathchar"#1#2#3#4$}}%
        \else
           \mathchar"#1#2#3#4%
        \fi 
    \else 
        \FindBoldGroup
        \ifnum\mathgroup=\theboldgroup 
           \mathchoice{\mbox{\boldmath$\displaystyle\mathchar"#1#2#3#4$}}%
                      {\mbox{\boldmath$\textstyle\mathchar"#1#2#3#4$}}%
                      {\mbox{\boldmath$\scriptstyle\mathchar"#1#2#3#4$}}%
                      {\mbox{\boldmath$\scriptscriptstyle\mathchar"#1#2#3#4$}}%
        \else
           \mathchar"#1#2#3#4%
        \fi     	    
	  \fi}
\newif\ifGreekBold  \GreekBoldfalse
\let\SAVEPBF=\pbf
\def\pbf{\GreekBoldtrue\SAVEPBF}%
  \newcounter{equationnumber}  
  \def\mathletters{%
     \addtocounter{equation}{1}
     \edef\@currentlabel{\theequation}%
     \setcounter{equationnumber}{\c@equation}
     \setcounter{equation}{0}%
     \edef\theequation{\@currentlabel\noexpand\alph{equation}}%
  }
    \def\BibTeX{{\rm B\kern-.05em{\sc i\kern-.025em b}\kern-.08em
                 T\kern-.1667em\lower.7ex\hbox{E}\kern-.125emX}}}{}%
\def\AmS{{\protect\usefont{OMS}{cmsy}{m}{n}%
                A\kern-.1667em\lower.5ex\hbox{M}\kern-.125emS}}}{}%
\def\@@eqncr{\let\@tempa\relax
    \ifcase\@eqcnt \def\@tempa{& & &}\or \def\@tempa{& &}%
      \else \def\@tempa{&}\fi
     \@tempa
     \if@eqnsw
        \iftag@
           \@taggnum
        \else
           \@eqnnum\stepcounter{equation}%
        \fi
     \fi
     \global\tag@false
     \global\@eqnswtrue
     \global\@eqcnt\z@\cr}
\def\TCItag{\@ifnextchar*{\@TCItagstar}{\@TCItag}}
\def\@TCItag#1{%
    \global\tag@true
    \global\def\@taggnum{(#1)}%
    \global\def\@currentlabel{#1}}
\def\@TCItagstar*#1{%
    \global\tag@true
    \global\def\@taggnum{#1}%
    \global\def\@currentlabel{#1}}
\def\tint{\msi@int\textstyle\int}%
\def\tiint{\msi@int\textstyle\iint}%
\def\tiiint{\msi@int\textstyle\iiint}%
\def\tiiiint{\msi@int\textstyle\iiiint}%
\def\tidotsint{\msi@int\textstyle\idotsint}%
\def\toint{\msi@int\textstyle\oint}%
\newtoks\temptoksa
\newtoks\temptoksb
\newtoks\temptoksc
\def\msi@int#1#2{%
 \def\@temp{{#1#2\the\temptoksc_{\the\temptoksa}^{\the\temptoksb}}}%
 \futurelet\@nextcs
 \@int
}
\def\@int{%
   \ifx\@nextcs\limits
      \typeout{Found limits}%
      \temptoksc={\limits}%
	  \let\@next\@intgobble%
   \else\ifx\@nextcs\nolimits
      \typeout{Found nolimits}%
      \temptoksc={\nolimits}%
	  \let\@next\@intgobble%
   \else
      \typeout{Did not find limits or no limits}%
      \temptoksc={}%
      \let\@next\msi@limits%
   \fi\fi
   \@next   
}%
\def\@intgobble#1{%
   \typeout{arg is #1}%
   \msi@limits
}
\def\msi@limits{%
   \temptoksa={}%
   \temptoksb={}%
   \@ifnextchar_{\@limitsa}{\@limitsb}%
}
\def\@limitsa_#1{%
   \temptoksa={#1}%
   \@ifnextchar^{\@limitsc}{\@temp}%
}
\def\@limitsb{%
   \@ifnextchar^{\@limitsc}{\@temp}%
}
\def\@limitsc^#1{%
   \temptoksb={#1}%
   \@ifnextchar_{\@limitsd}{\@temp}%
}
\def\@limitsd_#1{%
   \temptoksa={#1}%
   \@temp
}
\def\dint{\msi@int\displaystyle\int}%
\def\diint{\msi@int\displaystyle\iint}%
\def\diiint{\msi@int\displaystyle\iiint}%
\def\diiiint{\msi@int\displaystyle\iiiint}%
\def\didotsint{\msi@int\displaystyle\idotsint}%
\def\doint{\msi@int\displaystyle\oint}%
\def\dsum{\mathop{\displaystyle \sum }}%
\def\ExitTCILatex{\makeatother }
\if@compatibility\message{amsmath already loaded}\fi\aftergroup\ExitTCILatex}
\if@compatibility\message{amstex already loaded}\fi\aftergroup\ExitTCILatex}
\if@compatibility\message{amsgen already loaded}\fi\aftergroup\ExitTCILatex}
\let\DOTSI\relax
\def\RIfM@{\relax\ifmmode}%
\def\FN@{\futurelet\next}%
\def\iint{\DOTSI\intno@\tw@\FN@\ints@}%
\def\iiint{\DOTSI\intno@\thr@@\FN@\ints@}%
\def\iiiint{\DOTSI\intno@4 \FN@\ints@}%
\def\idotsint{\DOTSI\intno@\z@\FN@\ints@}%
\def\ints@{\findlimits@\ints@@}%
\newif\iflimtoken@
\newif\iflimits@
\def\findlimits@{\limtoken@true\ifx\next\limits\limits@true
 \else\ifx\next\nolimits\limits@false\else
 \limtoken@false\ifx\ilimits@\nolimits\limits@false\else
 \ifinner\limits@false\else\limits@true\fi\fi\fi\fi}%
\def\multint@{\int\ifnum\intno@=\z@\intdots@                          
 \else\intkern@\fi                                                    
 \ifnum\intno@>\tw@\int\intkern@\fi                                   
 \ifnum\intno@>\thr@@\int\intkern@\fi                                 
 \int}
\def\multintlimits@{\intop\ifnum\intno@=\z@\intdots@\else\intkern@\fi
 \ifnum\intno@>\tw@\intop\intkern@\fi
 \ifnum\intno@>\thr@@\intop\intkern@\fi\intop}%
\def\intic@{%
    \mathchoice{\hskip.5em}{\hskip.4em}{\hskip.4em}{\hskip.4em}}%
\def\negintic@{\mathchoice
 {\hskip-.5em}{\hskip-.4em}{\hskip-.4em}{\hskip-.4em}}%
\def\ints@@{\iflimtoken@                                              
 \def\ints@@@{\iflimits@\negintic@
   \mathop{\intic@\multintlimits@}\limits                             
  \else\multint@\nolimits\fi                                          
  \eat@}
 \else                                                                
 \def\ints@@@{\iflimits@\negintic@
  \mathop{\intic@\multintlimits@}\limits\else
  \multint@\nolimits\fi}\fi\ints@@@}%
\def\intkern@{\mathchoice{\!\!\!}{\!\!}{\!\!}{\!\!}}%
\def\plaincdots@{\mathinner{\cdotp\cdotp\cdotp}}%
\def\intdots@{\mathchoice{\plaincdots@}%
 {{\cdotp}\mkern1.5mu{\cdotp}\mkern1.5mu{\cdotp}}%
 {{\cdotp}\mkern1mu{\cdotp}\mkern1mu{\cdotp}}%
 {{\cdotp}\mkern1mu{\cdotp}\mkern1mu{\cdotp}}}%
\def\RIfM@{\relax\protect\ifmmode}
\def\text{\RIfM@\expandafter\text@\else\expandafter\mbox\fi}
\let\nfss@text\text
\def\text@#1{\mathchoice
   {\textdef@\displaystyle\f@size{#1}}%
   {\textdef@\textstyle\tf@size{\firstchoice@false #1}}%
   {\textdef@\textstyle\sf@size{\firstchoice@false #1}}%
   {\textdef@\textstyle \ssf@size{\firstchoice@false #1}}%
   \glb@settings}
\def\textdef@#1#2#3{\hbox{{%
                    \everymath{#1}%
                    \let\f@size#2\selectfont
                    #3}}}
\newif\iffirstchoice@
\def\Let@{\relax\iffalse{\fi\let\\=\cr\iffalse}\fi}%
\def\vspace@{\def\vspace##1{\crcr\noalign{\vskip##1\relax}}}%
\def\multilimits@{\bgroup\vspace@\Let@
 \baselineskip\fontdimen10 \scriptfont\tw@
 \advance\baselineskip\fontdimen12 \scriptfont\tw@
 \lineskip\thr@@\fontdimen8 \scriptfont\thr@@
 \lineskiplimit\lineskip
 \vbox\bgroup\ialign\bgroup\hfil$\m@th\scriptstyle{##}$\hfil\crcr}%
\def\Sb{_\multilimits@}%
\def\endSb{\crcr\egroup\egroup\egroup}%
\def\Sp{^\multilimits@}%
\newdimen\ex@
\def\rightarrowfill@#1{$#1\m@th\mathord-\mkern-6mu\cleaders
 \hbox{$#1\mkern-2mu\mathord-\mkern-2mu$}\hfill
 \mkern-6mu\mathord\rightarrow$}%
\def\leftarrowfill@#1{$#1\m@th\mathord\leftarrow\mkern-6mu\cleaders
 \hbox{$#1\mkern-2mu\mathord-\mkern-2mu$}\hfill\mkern-6mu\mathord-$}%
\def\leftrightarrowfill@#1{$#1\m@th\mathord\leftarrow
\mkern-6mu\cleaders
 \hbox{$#1\mkern-2mu\mathord-\mkern-2mu$}\hfill
 \mkern-6mu\mathord\rightarrow$}%
\def\overrightarrow{\mathpalette\overrightarrow@}%
\def\overrightarrow@#1#2{\vbox{\ialign{##\crcr\rightarrowfill@#1\crcr
 \noalign{\kern-\ex@\nointerlineskip}$\m@th\hfil#1#2\hfil$\crcr}}}%
\def\overleftarrow{\mathpalette\overleftarrow@}%
\def\overleftarrow@#1#2{\vbox{\ialign{##\crcr\leftarrowfill@#1\crcr
 \noalign{\kern-\ex@\nointerlineskip}$\m@th\hfil#1#2\hfil$\crcr}}}%
\def\overleftrightarrow{\mathpalette\overleftrightarrow@}%
\def\overleftrightarrow@#1#2{\vbox{\ialign{##\crcr
   \leftrightarrowfill@#1\crcr
 \noalign{\kern-\ex@\nointerlineskip}$\m@th\hfil#1#2\hfil$\crcr}}}%
\def\underrightarrow{\mathpalette\underrightarrow@}%
\def\underrightarrow@#1#2{\vtop{\ialign{##\crcr$\m@th\hfil#1#2\hfil
  $\crcr\noalign{\nointerlineskip}\rightarrowfill@#1\crcr}}}%
\def\underleftarrow{\mathpalette\underleftarrow@}%
\def\underleftarrow@#1#2{\vtop{\ialign{##\crcr$\m@th\hfil#1#2\hfil
  $\crcr\noalign{\nointerlineskip}\leftarrowfill@#1\crcr}}}%
\def\underleftrightarrow{\mathpalette\underleftrightarrow@}%
\def\underleftrightarrow@#1#2{\vtop{\ialign{##\crcr$\m@th
  \hfil#1#2\hfil$\crcr
 \noalign{\nointerlineskip}\leftrightarrowfill@#1\crcr}}}%
\def\qopnamewl@#1{\mathop{\operator@font#1}\nlimits@}
\let\nlimits@\displaylimits
\def\setboxz@h{\setbox\z@\hbox}
\def\varlim@#1#2{\mathop{\vtop{\ialign{##\crcr
 \hfil$#1\m@th\operator@font lim$\hfil\crcr
 \noalign{\nointerlineskip}#2#1\crcr
 \noalign{\nointerlineskip\kern-\ex@}\crcr}}}}
 \def\rightarrowfill@#1{\m@th\setboxz@h{$#1-$}\ht\z@\z@
  $#1\copy\z@\mkern-6mu\cleaders
  \hbox{$#1\mkern-2mu\box\z@\mkern-2mu$}\hfill
  \mkern-6mu\mathord\rightarrow$}
\def\leftarrowfill@#1{\m@th\setboxz@h{$#1-$}\ht\z@\z@
  $#1\mathord\leftarrow\mkern-6mu\cleaders
  \hbox{$#1\mkern-2mu\copy\z@\mkern-2mu$}\hfill
  \mkern-6mu\box\z@$}
\def\projlim{\qopnamewl@{proj\,lim}}
\def\injlim{\qopnamewl@{inj\,lim}}
\def\varinjlim{\mathpalette\varlim@\rightarrowfill@}
\def\varprojlim{\mathpalette\varlim@\leftarrowfill@}
\def\varliminf{\mathpalette\varliminf@{}}
\def\varliminf@#1{\mathop{\underline{\vrule\@depth.2\ex@\@width\z@
   \hbox{$#1\m@th\operator@font lim$}}}}
\def\varlimsup{\mathpalette\varlimsup@{}}
\def\varlimsup@#1{\mathop{\overline
  {\hbox{$#1\m@th\operator@font lim$}}}}
\def\align{\@verbatim \frenchspacing\@vobeyspaces \@alignverbatim
You are using the "align" environment in a style in which it is not defined.}
\let\csname endalign*\endcsname =\endtrivlist
\def\alignat{\@verbatim \frenchspacing\@vobeyspaces \@alignatverbatim
You are using the "alignat" environment in a style in which it is not defined.}
\let\csname endalignat*\endcsname =\endtrivlist
\def\xalignat{\@verbatim \frenchspacing\@vobeyspaces \@xalignatverbatim
You are using the "xalignat" environment in a style in which it is not defined.}
\let\csname endxalignat*\endcsname =\endtrivlist
\def\gather{\@verbatim \frenchspacing\@vobeyspaces \@gatherverbatim
You are using the "gather" environment in a style in which it is not defined.}
\let\csname endgather*\endcsname =\endtrivlist
\def\multiline{\@verbatim \frenchspacing\@vobeyspaces \@multilineverbatim
You are using the "multiline" environment in a style in which it is not defined.}
\let\csname endmultiline*\endcsname =\endtrivlist
\def\arrax{\@verbatim \frenchspacing\@vobeyspaces \@arraxverbatim
You are using a type of "array" construct that is only allowed in AmS-LaTeX.}
\def\tabulax{\@verbatim \frenchspacing\@vobeyspaces \@tabulaxverbatim
You are using a type of "tabular" construct that is only allowed in AmS-LaTeX.}
\let\csname endarrax*\endcsname =\endtrivlist
\let\csname endtabulax*\endcsname =\endtrivlist
 \def\endequation{%
     \ifmmode\ifinner 
      \iftag@
        \addtocounter{equation}{-1} 
        $\hfil
           \displaywidth\linewidth\@taggnum\egroup \endtrivlist
        \global\tag@false
        \global\@ignoretrue   
      \else
        $\hfil
           \displaywidth\linewidth\@eqnnum\egroup \endtrivlist
        \global\tag@false
        \global\@ignoretrue 
      \fi
     \else   
      \iftag@
        \addtocounter{equation}{-1} 
        \eqno \hbox{\@taggnum}
        \global\tag@false%
        $$\global\@ignoretrue
      \else
        \eqno \hbox{\@eqnnum}
        $$\global\@ignoretrue
      \fi
     \fi\fi
 } 
 \newif\iftag@ \tag@false
 \def\TCItag{\@ifnextchar*{\@TCItagstar}{\@TCItag}}
 \def\@TCItag#1{%
     \global\tag@true
     \global\def\@taggnum{(#1)}%
     \global\def\@currentlabel{#1}}
 \def\@TCItagstar*#1{%
     \global\tag@true
     \global\def\@taggnum{#1}%
     \global\def\@currentlabel{#1}}
     \def\tag{\@ifnextchar*{\@tagstar}{\@tag}}
     \def\@tag#1{%
         \global\tag@true
         \global\def\@taggnum{(#1)}}
     \def\@tagstar*#1{%
         \global\tag@true
         \global\def\@taggnum{#1}}
\begin{document}

\title{A remainder estimate for branched rough differential equations}
\author{Danyu Yang\thanks{%
danyuyang@cqu.edu.cn}}
\date{\vspace{-3ex}}
\maketitle

\begin{abstract}
Based on two isomorphisms of Hopf algebras, we provide a bound in the
optimal order on the remainder of the truncated Taylor expansion for
controlled differential equations driven by branched rough paths.
\end{abstract}

\section{Introduction}

In the seminal paper \cite{lyons1998differential}, Lyons builds the theory
of rough paths. The theory solves rough differential equations (RDEs) of the
form%
\begin{equation*}
dy_{t}=f\left( y_{t}\right) dx_{t},y_{0}=\xi ,
\end{equation*}%
where $x$ can be highly oscillating.\ Under a Lipschitz condition on the
vector field, Lyons proves the unique solvability of the differential
equation, and the solution obtained is continuous with respect to the
driving signal in rough paths metric. The theory has an embedded component
in stochastic analysis, and $x$ can be Brownian motion, continuous
semi-martingales, Markov processes, Gaussian processes \cite%
{friz2010multidimensional} etc.

In 1972, Butcher \cite{butcher1972algebraic} identifies a group structure in
a class of integration methods including Runge-Kutta methods and Picard
iterations, where each method can be represented by a family of real-valued
functions indexed by rooted trees. In\ \cite{grossman1989hopf}, Grossman and
Larson describe several Hopf algebras associated with families of trees. One
Hopf algebra of simple rooted trees, with product \cite[(3.1)]%
{grossman1989hopf} and coproduct \cite[p.199]{grossman1989hopf}, is
particularly relevant to our setting, which we refer to as the Grossman
Larson Hopf algebra, denoted as $\mathcal{H}$. In \cite{connes1998hopf},
Connes and Kreimer describe a Hopf algebra based on rooted trees \cite[%
Section 2]{connes1998hopf} to disentangle the intricate combinatorics of
divergences in quantum field theory. We call this Hopf algebra the Connes
Kreimer Hopf algebra, denoted as $\mathcal{H}_{R}$. The group identified by
Butcher is the group of characters of $\mathcal{H}_{R}$ \cite%
{connes1999lessons}. Based on \cite{panaite2000relating,
foissy2002algebresb, hoffman2003combinatorics}, $\mathcal{H}$ is isomorphic
to the graded dual of $\mathcal{H}_{R}$.

Rough differential equations are originally driven by geometric rough paths
over Banach spaces \cite{lyons1998differential}. Geometric rough paths
satisfy an abstract integration by parts formula, and take values in a
nilpotent Lie group. The nilpotent Lie group can be expressed as a truncated
group of characters of the shuffle Hopf algebra \cite{reutenauer1993free}.
In \cite{gubinelli2010ramification}, Gubinelli defines branched rough paths.
Branched rough paths take values in a truncated group of characters of a
labeled Connes Kreimer Hopf algebra. Both geometric and branched rough paths
are of finite $p$-variation in rough paths metric, and encode information
needed to construct solutions to differential equations. There exists a Hopf
algebra homomorphism from the Connes Kreimer Hopf algebra onto the shuffle
Hopf algebra, which induces an embedding of geometric rough paths into
branched rough paths. On the other hand, the Grossman Larson algebra is
freely generated by a collection of trees \cite{foissy2002finite,
chapoton2010free}. Based on the freeness of Grossman Larson algebra,
Boedihardjo and Chevyrev construct an isomorphism between branched rough
paths and a class of geometric rough paths \cite{boedihardjo2019isomorphism}%
. As a result, a branched RDE can be expressed as a geometric RDE driven by
a $\Pi $-rough path defined by Gyurk\'{o} \cite{gyurko2016differential}.

Based on the isomorphism between $\mathcal{H}$ and the graded dual of $%
\mathcal{H}_{R}$ \cite{panaite2000relating, foissy2002algebresb,
hoffman2003combinatorics}, we clarify a relationship between rough paths
taking values in the truncated group of characters of $\mathcal{H}_{R}$ and
rough paths taking values in the truncated group of grouplike elements in $%
\mathcal{H}$ (Proposition \ref{Proposition transfer between rough paths in
two groups}). Based on this relationship and the freeness of the Grossman
Larson algebra, sub-Riemannian geometry \cite[Section 7.5]%
{friz2010multidimensional} and the neo-classical inequality \cite%
{lyons1998differential, hara2010fractional}, which are typical geometric
rough paths tools, can be applied to branched rough paths. As an
application, we provide an estimate for the remainder of the truncated
Taylor expansion for controlled differential equations driven by branched
rough paths (Theorem \ref{main theorem}). The remainder estimate is in the
optimal order (Remark \ref{Remark optimal bound}), which is pleasantly
surprising noting the rapid increase of the dimensions of simple rooted
trees.

\section{Notations and Results}

A rooted tree is a finite connected graph that has no cycle with a
distinguished vertex called root. We call a rooted tree a tree. We assume
trees are non-planar, which means that the children trees of each vertex are
commutative. A forest is a commutative monomial of trees. The degree $%
\left\vert \rho \right\vert $ of a forest $\rho $ is given by the number of
vertices. For a given label set, a labeled forest is a forest for which each
vertex is attached with a label.

Denote the label set $\mathcal{L}:=\left\{ 1,2,\dots ,d\right\} $. Let $F_{%
\mathcal{L}}$ ($T_{\mathcal{L}}$) denote the set of $\mathcal{L}$-labeled
forests (trees) of degree greater or equal to $1$. Let $F_{\mathcal{L}}^{N}$
($T_{\mathcal{L}}^{N}$) denote the set of elements in $F_{\mathcal{L}}$ ($T_{%
\mathcal{L}}$) of degree $1,\dots ,N$.

Let $G_{\mathcal{L}}^{N}$ denote the set of degree-$N$ characters of the $%
\mathcal{L}$-labeled Connes Kreimer Hopf algebra\ \cite[p.214]%
{connes1998hopf}. $a$ is an element of $G_{\mathcal{L}}^{N}$, if $a$ is an $%
\mathbb{R}
$-linear map $%
\mathbb{R}
F_{\mathcal{L}}^{N}\rightarrow 
\mathbb{R}
$ that satisfies%
\begin{equation*}
\left( a,\rho _{1}\right) \left( a,\rho _{2}\right) =\left( a,\rho _{1}\rho
_{2}\right)
\end{equation*}%
for every $\rho _{1},\rho _{2}\in F_{\mathcal{L}}^{N},\left\vert \rho
_{1}\right\vert +\left\vert \rho _{2}\right\vert \leq N$, where $\rho
_{1}\rho _{2}$ denotes the multiplication of commutative monomials of
trees.\ Let\ $\triangle $\ denote the coproduct of the Connes Kreimer Hopf
algebra based on admissible cuts \cite[p.215]{connes1998hopf}. Then $G_{%
\mathcal{L}}^{N}$ is a group with the multiplication given by%
\begin{equation*}
\left( ab,\rho \right) :=\left( a\otimes b,\triangle \rho \right)
\end{equation*}%
for every $\rho \in F_{\mathcal{L}}^{N}$.\ $G_{\mathcal{L}}^{N}$ is a
labeled\ truncated Butcher group\ \cite{butcher1972algebraic}. We equip $G_{%
\mathcal{L}}^{N}\ $with the norm%
\begin{equation}
\left\Vert a\right\Vert :=\max_{\rho \in F_{\mathcal{L}}^{N}}\left\vert
\left( a,\rho \right) \right\vert ^{\frac{1}{\left\vert \rho \right\vert }}.
\label{norm on GLN}
\end{equation}

With $\mathcal{L}=\left\{ 1,2,\dots ,d\right\} $, let $\mathcal{H}_{\mathcal{%
L}}$ denote the $\mathcal{L}$-labeled Grossman Larson Hopf algebra with
product\ \cite[(3.1)]{grossman1989hopf} and coproduct \cite[p.199]%
{grossman1989hopf}. Denote the product and coproduct of $\mathcal{H}_{%
\mathcal{L}}$ as $\ast $ and $\delta $ respectively. We consider $\mathcal{H}%
_{\mathcal{L}}$ as a Hopf algebra of labeled forests (by deleting the
additional root in \cite{grossman1989hopf}). An element $a\in \mathcal{H}_{%
\mathcal{L}}$ is \textit{grouplike} if $\delta a=a\otimes a$. Let $\mathcal{G%
}_{\mathcal{L}}$ denote the group of grouplike elements in $\mathcal{H}_{%
\mathcal{L}}$. For integer $N\geq 1$, the set of series $b=\sum_{\rho \in F_{%
\mathcal{L}},\left\vert \rho \right\vert >N}\left( b,\rho \right) \rho $
form an ideal of $\mathcal{H}_{\mathcal{L}}$. Let $\mathcal{H}_{\mathcal{L}%
}^{N}$ denote the quotient algebra. Denote $\mathcal{G}_{\mathcal{L}}^{N}:=%
\mathcal{G}_{\mathcal{L}}\cap \mathcal{H}_{\mathcal{L}}^{N}$. $\mathcal{G}_{%
\mathcal{L}}^{N}$ is a group. We equip $\mathcal{G}_{\mathcal{L}}^{N}$ with
a continuous homogeneous norm.

Let $\bullet _{a}$ denote the labeled tree of one vertex with a label $a\in 
\mathcal{L}$ on the vertex. Let $\left[ \tau _{1}\cdots \tau _{k}\right]
_{a} $ denote the labeled tree with children trees $\tau _{1},\dots ,\tau
_{k}$ on the root and a label $a\in \mathcal{L}$ on the root. Define $\sigma
:F_{\mathcal{L}}\rightarrow 
\mathbb{N}
$ as the symmetry factor given inductively by $\sigma \left( \bullet
_{a}\right) :=1$ and 
\begin{equation*}
\sigma \left( \tau _{1}^{n_{1}}\cdots \tau _{k}^{n_{k}}\right) =\sigma
\left( \left[ \tau _{1}^{n_{1}}\cdots \tau _{k}^{n_{k}}\right] _{a}\right)
:=n_{1}!\cdots n_{k}!\sigma \left( \tau _{1}\right) ^{n_{1}}\cdots \sigma
\left( \tau _{k}\right) ^{n_{k}},
\end{equation*}%
where $\tau _{i}\in T_{\mathcal{L}}$ are different labeled trees (with
labels counted). $\sigma $ is the order of the permutation group on vertices
in a tree that keeps the tree unchanged.

Let $\bigtriangleup $ denote the coproduct of the Connes Kreimer Hopf
algebra, and let $\ast $ denote the product of the Grossman Larson Hopf
algebra. Based on \cite[Theorem 43]{foissy2002algebresb} and \cite[%
Proposition 4.4]{hoffman2003combinatorics}, for $\rho \in F_{\mathcal{L}}$,%
\begin{equation}
\bigtriangleup \rho =\sum_{\rho _{i}\in F_{\mathcal{L}}}\frac{\sigma \left(
\rho \right) }{\sigma \left( \rho _{1}\right) \sigma \left( \rho _{2}\right) 
}\left( \rho _{1}\ast \rho _{2},\rho \right) \rho _{1}\otimes \rho _{2}.
\label{inner GL product and CK coproduct}
\end{equation}

\begin{definition}
Suppose $G$ is a group with norm $\left\Vert \cdot \right\Vert $. Let $X:%
\left[ 0,T\right] \rightarrow \left( G,\left\Vert \cdot \right\Vert \right) $%
. Denote%
\begin{equation*}
X_{s,t}:=X_{s}^{-1}X_{t}.
\end{equation*}%
For $p\geq 1$, define%
\begin{equation*}
\left\Vert X\right\Vert _{p-var,\left[ 0,T\right] }:=\left(
\sup_{0=t_{0}<\cdots <t_{n}=T,n\geq 1}\sum_{i=0}^{n-1}\left\Vert
X_{t_{i},t_{i+1}}\right\Vert ^{p}\right) ^{\frac{1}{p}}.
\end{equation*}
\end{definition}

For $p\geq 1$, let $\left[ p\right] $ denote the largest integer that is
less or equal to $p$.

\begin{definition}
For $p\geq 1$, $X$\ is a branched $p$-rough path if $X:\left[ 0,T\right]
\rightarrow G_{\mathcal{L}}^{\left[ p\right] }$ is continuous and of finite $%
p$-variation.
\end{definition}

\begin{proposition}
\label{Proposition transfer between rough paths in two groups}For $p\geq 1$,
suppose $X:\left[ 0,T\right] \rightarrow G_{\mathcal{L}}^{\left[ p\right] }$
is a branched $p$-rough path. Define $\bar{X}:\left[ 0,T\right] \rightarrow
\left( F_{\mathcal{L}}^{\left[ p\right] }\rightarrow 
\mathbb{R}
\right) $ as%
\begin{equation*}
\left( \bar{X}_{t},\rho \right) :=\frac{\left( X_{t},\rho \right) }{\sigma
\left( \rho \right) }
\end{equation*}%
for $t\in \left[ 0,T\right] $ and $\rho \in F_{\mathcal{L}}^{\left[ p\right]
}$. Then $\bar{X}$\ takes values in $\mathcal{G}_{\mathcal{L}}^{\left[ p%
\right] }$, is continuous and of finite $p$-variation, and 
\begin{equation}
\left( \bar{X}_{s,t},\rho \right) =\frac{\left( X_{s,t},\rho \right) }{%
\sigma \left( \rho \right) }  \label{relation between X and Xbar 2}
\end{equation}%
for $0\leq s\leq t\leq T$ and $\rho \in F_{\mathcal{L}}^{\left[ p\right] }$.
For integer $N\geq \left[ p\right] +1$, there exists a unique extension of $%
X $ resp. $\bar{X}$ to a continuous path of finite $p$-variation taking
values in $G_{\mathcal{L}}^{N}$ resp. $\mathcal{G}_{\mathcal{L}}^{N}$. Still
denote their extension as $X$ resp. $\bar{X}$. Then $\left( \ref{relation
between X and Xbar 2}\right) $ holds for $0\leq s\leq t\leq T$ and $\rho \in
F_{\mathcal{L}}^{N}$.
\end{proposition}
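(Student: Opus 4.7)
The plan is to dispose of the grouplike property, the increment formula, the regularity claim, and the extension, in that order, by exploiting a Hopf duality between $\mathcal{H}_{\mathcal{L}}$ and the Connes--Kreimer Hopf algebra $\mathcal{H}_R$.

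First I would introduce the $\sigma$-weighted pairing $\langle a,\rho\rangle := \sigma(\rho)(a,\rho)$. Under this pairing the bar-map satisfies $\langle \bar{a},\rho\rangle = (a,\rho)$, so $\bar{X}_t$ is characterized by $\langle \bar{X}_t,\rho\rangle = (X_t,\rho)$. Relation~$(\ref{inner GL product and CK coproduct})$ is precisely the statement that the Grossman--Larson product $\ast$ is dual to the Connes--Kreimer coproduct $\bigtriangleup$ under this pairing; by the Hopf-algebraic isomorphism of Panaite--Foissy--Hoffman, the Grossman--Larson coproduct $\delta$ is consequently dual to the Connes--Kreimer product on forests. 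Pairing $\delta\bar{X}_t$ with an arbitrary $\rho_1\otimes\rho_2$ then gives $\langle\bar{X}_t,\rho_1\rho_2\rangle=(X_t,\rho_1\rho_2)=(X_t,\rho_1)(X_t,\rho_2)=\langle\bar{X}_t,\rho_1\rangle\langle\bar{X}_t,\rho_2\rangle$, where the middle equality uses that $X_t$ is a character of $\mathcal{H}_R$. Hence $\delta\bar{X}_t=\bar{X}_t\otimes\bar{X}_t$, i.e.\ $\bar{X}_t\in\mathcal{G}_{\mathcal{L}}^{[p]}$.

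The same duality handles the increment identity. For characters $a,b$ of $\mathcal{H}_R$,
\begin{equation*}
\langle\bar{a}\ast\bar{b},\rho\rangle
=\langle\bar{a}\otimes\bar{b},\bigtriangleup\rho\rangle
=\sum_{(\rho)}\langle\bar{a},\rho^{(1)}\rangle\langle\bar{b},\rho^{(2)}\rangle
=\sum_{(\rho)}(a,\rho^{(1)})(b,\rho^{(2)})
=(ab,\rho)
=\langle\overline{ab},\rho\rangle,
\end{equation*}
so $\bar{a}\ast\bar{b}=\overline{ab}$; in particular the bar-map is a group isomorphism $G_{\mathcal{L}}^{[p]}\to\mathcal{G}_{\mathcal{L}}^{[p]}$ and $\bar{X}_{s,t}=\overline{X_{s,t}}$, which is exactly~$(\ref{relation between X and Xbar 2})$. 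Continuity and finite $p$-variation of $\bar{X}$ then follow from those of $X$: on the finite-dimensional graded nilpotent group $\mathcal{G}_{\mathcal{L}}^{[p]}$ any two continuous homogeneous norms are equivalent, and through $(\bar{X}_{s,t},\rho)=(X_{s,t},\rho)/\sigma(\rho)$ the finite constant $\max_{\rho\in F_{\mathcal{L}}^{[p]}}\sigma(\rho)^{1/|\rho|}$ controls the ratio between the two norms.

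Finally, for $N\ge[p]+1$ I would invoke Gubinelli's extension theorem for branched rough paths: $X$ admits a unique continuous extension of finite $p$-variation to $G_{\mathcal{L}}^N$. Define the extended $\bar{X}$ by the same rule $(\bar{X}_t,\rho):=(X_t,\rho)/\sigma(\rho)$ for $\rho\in F_{\mathcal{L}}^N$. The two duality computations above apply verbatim, showing that $\bar{X}_t\in\mathcal{G}_{\mathcal{L}}^N$ and that~$(\ref{relation between X and Xbar 2})$ persists. Uniqueness of the $\bar{X}$-extension is inherited from that of $X$: since the bar-map is bijective, any competing continuous finite-$p$-variation extension of $\bar{X}$ in $\mathcal{G}_{\mathcal{L}}^N$ would produce a second extension of $X$, contradicting the Gubinelli uniqueness. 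The only real bookkeeping difficulty I anticipate is tracking the $\sigma$-factors consistently through the duality so that the grouplike-versus-character correspondence is clean; once the pairing is set up, the rest of the argument is essentially formal.
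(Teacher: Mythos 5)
Your proposal is correct and runs on the same engine as the paper's proof: both arguments reduce everything to the duality identity $(\ref{inner GL product and CK coproduct})$ between the Grossman--Larson product and the Connes--Kreimer coproduct under the $\sigma$-weighted pairing. The paper verifies $(\ref{relation between X and Xbar 2})$ by induction on degree and then checks it for the extension through the explicit partition-limit formula, whereas you package the identical computation as the statement that the bar map is a group isomorphism intertwining the two structures and transport the extension and its uniqueness through that isomorphism; this is an organizational difference, not a different route, and your version has the minor merit of making the grouplike property of $\bar{X}_t$ explicit where the paper leaves it implicit.
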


\begin{remark}
Since the Grossman Larson algebra is free on a collection of trees \cite%
{foissy2002finite, chapoton2010free}, $\bar{X}$ acts as a bridge between $X$
and geometric rough paths. In particular, sub-Riemannian geometry technique 
\cite[Section 7.5]{friz2010multidimensional} and the neo-classical
inequality \cite{lyons1998differential, hara2010fractional} can be applied
to $\bar{X}$. Then results are transferred back to $X$ based on $\left( \ref%
{relation between X and Xbar 2}\right) $.
\end{remark}

Let $L\left( 
\mathbb{R}
^{d},%
\mathbb{R}
^{e}\right) $ denote the set of linear mappings from $%
\mathbb{R}
^{d}$ to $%
\mathbb{R}
^{e}$. For $f=\left( f_{1},\dots ,f_{d}\right) :%
\mathbb{R}
^{e}\rightarrow L\left( 
\mathbb{R}
^{d},%
\mathbb{R}
^{e}\right) $ that is sufficiently smooth, define $f:T_{\mathcal{L}%
}\rightarrow \left( 
\mathbb{R}
^{e}\rightarrow 
\mathbb{R}
^{e}\right) $ inductively as 
\begin{equation}
f\left( \bullet _{a}\right) :=f_{a}\text{ and }f\left( \left[ \tau
_{1}\cdots \tau _{k}\right] _{a}\right) :=\left( d^{k}f_{a}\right) \left(
f\left( \tau _{1}\right) \cdots f\left( \tau _{k}\right) \right)
\label{Definition of f(tau)}
\end{equation}%
for $\tau _{i}\in T_{\mathcal{L}}$ and $a\in \mathcal{L}$, where $d^{k}f_{a}$
denotes the $k$th Fr\'{e}chet derivative of $f_{a}$.

Lipschitz functions and norms are defined as in\ \cite[Definition 1.2.4,
p.230]{lyons1998differential}. For $\gamma >1$, let $\lfloor \gamma \rfloor $
denote the largest integer that is strictly less than $\gamma $.

\begin{theorem}
\label{main theorem}For $\gamma >p\geq 1$, suppose $X:\left[ 0,T\right]
\rightarrow G_{\mathcal{L}}^{\left[ p\right] }\ $is a branched $p$-rough
path over base space $%
\mathbb{R}
^{d}$, and $f:%
\mathbb{R}
^{e}\rightarrow L\left( 
\mathbb{R}
^{d},%
\mathbb{R}
^{e}\right) $ is $\limfunc{Lip}\left( \gamma \right) $. Let $y$ denote the
unique solution of the branched rough differential equation%
\begin{equation*}
dy_{t}=f\left( y_{t}\right) dX_{t},y_{0}=\xi \in 
\mathbb{R}
^{e}.
\end{equation*}%
Then with $N:=\lfloor \gamma \rfloor $, there exist two positive constants $%
c_{p,d,\omega \left( 0,T\right) }^{1}\ $and $c_{p,d}^{2}\ $such that,%
\begin{equation}
\left\Vert y_{t}-y_{s}-\sum_{\tau \in T_{\mathcal{L}}^{N}}f\left( \tau
\right) \left( y_{s}\right) \frac{\left( X_{s,t},\tau \right) }{\sigma
\left( \tau \right) }\right\Vert \leq c_{p,d,\omega \left( 0,T\right) }^{1}N!%
\frac{\omega \left( s,t\right) ^{\frac{N+1}{p}}}{\left( \frac{N+1}{p}\right)
!}  \label{main estimate}
\end{equation}%
where $\omega \left( s,t\right) :=c_{p,d}^{2}\left\Vert f\right\Vert _{%
\limfunc{Lip}\left( \gamma \right) }^{p}\left\Vert X\right\Vert _{p-var,%
\left[ s,t\right] }^{p}$.
\end{theorem}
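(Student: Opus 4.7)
The plan is to reduce the branched Taylor remainder to a geometric one via Proposition \ref{Proposition transfer between rough paths in two groups}, and then to apply the classical Lyons Taylor expansion estimate for geometric rough differential equations. Using \eqref{relation between X and Xbar 2}, the quantity inside the norm in \eqref{main estimate} equals
\begin{equation*}
y_{t}-y_{s}-\sum_{\tau \in T_{\mathcal{L}}^{N}} f(\tau)(y_{s})\,(\bar{X}_{s,t},\tau),
\end{equation*}
so it suffices to prove a Taylor remainder estimate for $y$ expressed in terms of the grouplike path $\bar{X}$ acting on trees.

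Because the Grossman--Larson algebra $\mathcal{H}_{\mathcal{L}}$ is free on a distinguished set of trees \cite{foissy2002finite, chapoton2010free}, the path $\bar{X}$ corresponds, via the Boedihardjo--Chevyrev isomorphism \cite{boedihardjo2019isomorphism}, to a genuine geometric $p$-rough path over the vector space spanned by the free generators. I would then introduce an augmented vector field $F$ whose component along each tree generator $\tau$ is $f(\tau)$, defined via the recursion \eqref{Definition of f(tau)}; a Fa\`a di Bruno argument shows that $F$ inherits $\limfunc{Lip}(\gamma)$ regularity from $f$, with norm controlled by $\|f\|_{\limfunc{Lip}(\gamma)}$ up to a constant depending on $p$ and $d$. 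The solution $y$ of the branched RDE coincides with the solution of the geometric RDE $d\bar{y}_{t}=F(\bar{y}_{t})\,d\bar{X}_{t}$, so its Taylor expansion truncated at level $N$ is exactly the displayed sum.

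At this point the geometric Taylor expansion estimate, proved by combining sub-Riemannian geodesic approximation \cite[Section~7.5]{friz2010multidimensional} with the neo-classical inequality \cite{lyons1998differential, hara2010fractional}, produces a bound of order $\omega(s,t)^{(N+1)/p}/((N+1)/p)!$. Translating back to $X$ through the rescaling $\rho \mapsto \rho/\sigma(\rho)$ and unpacking the tree-indexed iterated integrals of $\bar{X}$ in terms of $X$ via \eqref{inner GL product and CK coproduct} yields \eqref{main estimate}, the $N!$ prefactor appearing when one absorbs the combinatorial cost of assembling $N$ derivatives along a rooted tree and offsetting the symmetry factors $\sigma(\tau)$ reintroduced by the conversion between the two bases.

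The main obstacle is sharp control of constants. Since the $N!$ prefactor is optimal (Remark \ref{Remark optimal bound}) and the number of trees in $T_{\mathcal{L}}^{N}$ grows super-exponentially, one cannot afford crude bounds either on $\|F\|_{\limfunc{Lip}(\gamma)}$ or at the neo-classical step. The delicate point is to leverage \eqref{inner GL product and CK coproduct} — which relates the Connes--Kreimer coproduct $\bigtriangleup$ and the Grossman--Larson product $\ast$ through precisely the symmetry factors used to build $\bar{X}$ — so that the geometric remainder, expressed over free-algebra generators, descends to trees without any combinatorial loss beyond the $N!$ that is already optimal.
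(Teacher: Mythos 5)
Your outline correctly identifies every ingredient the paper uses (the transfer to $\bar{X}$, freeness of the Grossman--Larson algebra, sub-Riemannian geodesics, the neo-classical inequality), but the central step --- ``apply the geometric Taylor expansion estimate'' to the augmented system $d\bar{y}=F(\bar{y})\,d\bar{X}$ --- is not a citable black box, and your justification for it contains a genuine error. The component of $F$ along a generator $\tau$ of degree $k$ is $f(\tau)$, which by $\left( \ref{Definition of f(tau)}\right)$ involves $k-1$ derivatives of $f$; it is therefore only $\limfunc{Lip}\left( \gamma-k+1\right)$, not $\limfunc{Lip}\left( \gamma\right)$. Since the generators in $\mathcal{B}_{\mathcal{L}}^{\left[ p\right] }$ have degrees $1,\dots,\left[ p\right]$ and $\bar{X}$ is correspondingly a $\Pi$-rough path with inhomogeneous letter degrees (exactly why the paper invokes Gyurk\'{o}'s framework), no homogeneous geometric Taylor-remainder theorem applies off the shelf. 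This is not a technicality: the entire difficulty of the theorem is that the factorial-decay estimate for geometric RDE remainders must be redone as an inhomogeneous analogue of \cite{boedihardjo2015uniform}, and that is where essentially all of the work lies.

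Concretely, the paper replaces your one-line citation by a full induction over words $w$ in the free generators, descending in $\left\vert w\right\vert$ from $N$: it introduces the family of vector fields $F^{w}$, the ODE solution $y^{s,t}$ driven by a geodesic associated with $\bar{X}_{s,t}$, compares $y$ with $y^{s,t}$, bounds the number of trees occurring in $F^{w}$ by $\left( \left\vert w\right\vert -1\right)!$ (this is where the $N!$ really comes from, not from ``offsetting the symmetry factors''), controls the number of words of degree $n$ by $\left( K_{p}d\right)^{n}$, and only then runs the ``successively dropping points'' argument with the neo-classical inequality on the almost-additive functional $L_{s,t}=\sum_{l}Y_{s}^{lw}\left( T_{s,t}^{X},l\right)$. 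Without this inductive machinery --- or an actual reference proving the inhomogeneous geometric estimate with the stated constants --- your argument has a gap precisely at the step the theorem is about.
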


The solution to branched RDEs is defined as in \cite[Section 8.1]%
{gubinelli2010ramification}. The existing Taylor remainder estimates for the
solution of branched RDEs only deal with the case $N=\left[ p\right] $ \cite[%
Theorem 8.8]{gubinelli2010ramification}. Theorem \ref{main theorem}
considers the general case $N\geq \left[ p\right] $, and the estimate $%
\left( \ref{main estimate}\right) $ is in the optimal order (Remark \ref%
{Remark optimal bound}).

\begin{remark}
When $p=1$ and $\left\Vert f\right\Vert _{\limfunc{Lip}\left( \infty \right)
}<\infty $, suppose $\left\Vert f\right\Vert _{\limfunc{Lip}\left( \infty
\right) }\left\Vert X\right\Vert _{1-var,\left[ s,t\right] }<\left(
c_{1,d}^{2}\right) ^{-1}$. Then the Taylor series converges \cite[Theorem 5.1%
]{gubinelli2010ramification}.
\end{remark}

Suppose $x:\left[ 0,T\right] \rightarrow 
\mathbb{R}
^{d}$ is continuous and of bounded variation, and $f:%
\mathbb{R}
^{e}\rightarrow L\left( 
\mathbb{R}
^{d},%
\mathbb{R}
^{e}\right) $ is sufficiently smooth. Consider the ODE%
\begin{equation*}
dy_{t}=f\left( y_{t}\right) dx_{t},y_{0}=\xi .
\end{equation*}%
Based on the fundamental theorem of calculus, for $s\leq t$,%
\begin{equation*}
y_{t}=y_{s}+\sum_{k=1}^{N}f^{\circ k}\left( y_{s}\right)
X_{s,t}^{k}+\idotsint\limits_{s<u_{1}<\cdots <u_{N+1}<t}f^{\circ \left(
N+1\right) }\left( y_{u_{1}}\right) dx_{u_{1}}\cdots dx_{u_{N+1}}
\end{equation*}%
where $f^{\circ 1}:=f,f^{\circ \left( k+1\right) }:=df^{\circ k}\left(
f\right) $ and $X_{s,t}^{k}:=\idotsint\limits_{s<u_{1}<\cdots
<u_{k}<t}dx_{u_{1}}\otimes \cdots \otimes dx_{u_{k}}$.

\begin{remark}
\label{Remark optimal bound}Suppose $X$ is a geometric $p$-rough path. Let $%
X_{s,t}^{N+1}$ denote the $\left( N+1\right) $th level element of $X$ on $%
\left[ s,t\right] $. Based on Lyons' extension Theorem \cite[Theorem 2.2.1,
p.242]{lyons1998differential}, there exists a positive constant $\beta _{p}$
such that 
\begin{equation*}
\left\Vert X_{s,t}^{N+1}\right\Vert \leq \frac{\omega \left( s,t\right) ^{%
\frac{N+1}{p}}}{\beta _{p}\left( \frac{N+1}{p}\right) !}
\end{equation*}%
for every $s\leq t$ and every $N$. On the other hand, consider $f\left(
t\right) =e^{-t}$. Then $\left\Vert f\right\Vert _{Lip\left( n\right) }=1$
on $t\geq 0$ for $n=1,2,\dots $ and 
\begin{equation*}
\left\vert f^{\circ \left( N+1\right) }\left( 0\right) \right\vert =N!
\end{equation*}%
The estimate $\left( \ref{main estimate}\right) $ states that the remainder
can be bounded similarly to $f^{\circ \left( N+1\right) }\left( y_{s}\right)
X_{s,t}^{N+1}$ that is in the optimal order even in the geometric case. The
dimension of trees contributes a geometric increase factor\footnote{%
Based on \cite[Section I.5.2]{flajolet2009analytic}, the number of unlabeled
simple rooted trees is EIS A000081, and $H_{n}\sim \lambda \beta
^{n}n^{-3/2} $ with $\lambda $ approximately $0.43992$ and $\beta $
approximately $2.95576 $.} that is part of the control $\omega $.
\end{remark}

The proof of Theorem \ref{main theorem} is based on a mathematical induction
that is an inhomogeneous analogue of \cite{boedihardjo2015uniform}. The main
estimate $\left( \ref{main estimate}\right) $ is obtained by exploring the
sub-Riemannian geometry of the truncated group of grouplike elements in the
Grossman Larson Hopf algebra. The sub-Riemannian geometry structure is
similar to that of the nilpotent Lie group \cite[Theorem 7.32]%
{friz2010multidimensional}. The factor $\left( \left( N+1\right) /p\right) !$
is obtained by the neo-classical inequality \cite{lyons1998differential,
hara2010fractional}. The tree neo-classical inequality is known to be false 
\cite[Section 3]{boedihardjo2018decay}. Since the Grossman Larson algebra is
free on a collection of trees, the analysis can be transferred back to the
Tensor algebra where the neo-classical inequality holds. Our estimates rely
critically on the simple fact that the number of words generated by a finite
set of letters grows geometrically (Lemma \ref{Lemma bound on Tn}).

\section{Proofs}

\begin{proof}[Proof of Proposition \protect\ref{Proposition transfer between
rough paths in two groups}]
Since $\left( X_{t},\rho \right) /\sigma \left( \rho \right) =\left( \bar{X}%
_{t},\rho \right) $ for $\rho \in F_{\mathcal{L}}$, $\left\vert \rho
\right\vert =1,\dots ,\left[ p\right] $, it can be proved inductively based
on $\left( \ref{inner GL product and CK coproduct}\right) $ that for\ $\rho
\in F_{\mathcal{L}}$, $\left\vert \rho \right\vert =1,\dots ,\left[ p\right] 
$, and $s\leq t$,%
\begin{equation*}
\left( \bar{X}_{s,t},\rho \right) =\frac{\left( X_{s,t},\rho \right) }{%
\sigma \left( \rho \right) }.
\end{equation*}

The existence and uniqueness of the extension of $X$ and $\bar{X}$ can be
proved similarly to \cite[Theorem 2.2.1]{lyons1998differential}. Based on $%
\left( \ref{inner GL product and CK coproduct}\right) $, when $\rho \in F_{%
\mathcal{L}}$, $\left\vert \rho \right\vert =n,n\geq \left[ p\right] +1,$%
\begin{eqnarray*}
&&\left( X_{s,t},\rho \right) \\
&=&\sigma \left( \rho \right) \lim_{D\subset \left[ s,t\right] ,\left\vert
D\right\vert \rightarrow 0}\sum_{\rho _{i}\in F_{\mathcal{L}},\left\vert
\rho _{i}\right\vert <\left\vert \rho \right\vert }\frac{\left(
X_{t_{0},t_{1}},\rho _{1}\right) }{\sigma \left( \rho _{1}\right) }\cdots 
\frac{\left( X_{t_{k-1},t_{k}},\rho _{k}\right) }{\sigma \left( \rho
_{k}\right) }\left( \rho _{1}\ast \cdots \ast \rho _{k},\rho \right) \\
&=&\sigma \left( \rho \right) \lim_{D\subset \left[ s,t\right] ,\left\vert
D\right\vert \rightarrow 0}\sum_{\rho _{i}\in F_{\mathcal{L}},\left\vert
\rho _{i}\right\vert <\left\vert \rho \right\vert }\left( \bar{X}%
_{t_{0},t_{1}},\rho _{1}\right) \cdots \left( \bar{X}_{t_{k-1},t_{k}},\rho
_{k}\right) \left( \rho _{1}\ast \cdots \ast \rho _{k},\rho \right) \\
&=&\sigma \left( \rho \right) \left( \bar{X}_{s,t},\rho \right) .
\end{eqnarray*}
\end{proof}

Based on \cite[Section 8]{foissy2002finite} and \cite{chapoton2010free}, the
Grossman Larson algebra is freely generated by a collection of unlabeled
trees. Denote this collection of trees as $\mathcal{B}$. Let $\mathcal{B}_{%
\mathcal{L}}$ denote the $\mathcal{L}$-labeled version of $\mathcal{B}$ with 
$\mathcal{L}=\left\{ 1,2,\dots ,d\right\} $.

\begin{notation}
\label{Notation tau1 tau2 ... tauK}Let $\mathcal{B}_{\mathcal{L}}^{\left[ p%
\right] }=\left\{ \upsilon _{1},\dots ,\upsilon _{K}\right\} $ denote the
set of elements in $\mathcal{B}_{\mathcal{L}}$ with degree less or equal to $%
\left[ p\right] $.
\end{notation}

\begin{definition}
\label{Definition geodesic norm}For $a\in \mathcal{G}_{\mathcal{L}}^{\left[ p%
\right] }$, define%
\begin{equation}
\left\Vert a\right\Vert ^{\prime }:=\inf_{x}\sum_{i=1}^{K}\left\Vert
x^{\upsilon _{i}}\right\Vert _{1-var}^{\frac{1}{\left\vert \upsilon
_{i}\right\vert }}  \label{norm dash}
\end{equation}%
where the infimum is taken over all continuous bounded variation paths $%
x=\left( x^{\upsilon _{1}},\dots ,x^{\upsilon _{K}}\right) :\left[ 0,1\right]
\rightarrow 
\mathbb{R}
^{K}$ that satisfy%
\begin{equation*}
\left( a,\upsilon _{i_{1}}\ast \cdots \ast \upsilon _{i_{k}}\right)
=\idotsint\limits_{0<u_{1}<\cdots <u_{k}<1}dx_{u_{1}}^{\upsilon
_{i_{1}}}\cdots dx_{u_{k}}^{\upsilon _{i_{k}}}
\end{equation*}%
for $\upsilon _{i_{j}}\in \mathcal{B}_{\mathcal{L}}^{\left[ p\right] }$, $%
\left\vert \upsilon _{i_{1}}\right\vert +\cdots +\left\vert \upsilon
_{i_{k}}\right\vert \leq \left[ p\right] $. The infimum in $\left( \ref{norm
dash}\right) $ can be obtained at a continuous bounded variation path $x$,
which is called a \emph{geodesic} associated with $a\in \mathcal{G}_{%
\mathcal{L}}^{\left[ p\right] }$.
\end{definition}

\begin{remark}
Such $x$ exists based on Chow-Rashevskii Theorem \cite[Theorem 7.28]%
{friz2010multidimensional}. Based on Arzel\`{a}-Ascoli Theorem and lower
semi-continuity of $1$-variation, the infimum can be obtained at some $x$
that is continuous and of bounded variation.
\end{remark}

For $a:F_{\mathcal{L}}^{N}\rightarrow 
\mathbb{R}
$ and $c>0$, define $\delta _{c}a:F_{\mathcal{L}}^{N}\rightarrow 
\mathbb{R}
$ as $\left( \delta _{c}a,\rho \right) :=c^{\left\vert \rho \right\vert
}\left( a,\rho \right) $. A norm $\left\Vert \cdot \right\Vert $ is
homogeneous if $\left\Vert \delta _{c}a\right\Vert =c\left\Vert a\right\Vert 
$ for every $c>0$ and every $a$. $\left\Vert \cdot \right\Vert ^{\prime }$
is a continuous homogeneous norm. The continuity of $\left\Vert \cdot
\right\Vert ^{\prime }$ can be proved similarly as \cite[Proposition 7.40(v)]%
{friz2010multidimensional}.

\begin{proposition}
\label{Proposition equivalency of norms}Continuous homogeneous norms on $%
\mathcal{G}_{\mathcal{L}}^{\left[ p\right] }$ are equivalent up to a
constant depending on $p$ and $d$.
\end{proposition}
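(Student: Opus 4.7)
The plan is a standard homogeneous-sphere compactness argument, of the type used for continuous homogeneous norms on stratified nilpotent Lie groups (compare \cite{friz2010multidimensional}). The group $\mathcal{G}_{\mathcal{L}}^{[p]}$ sits as a subset of the finite-dimensional quotient algebra $\mathcal{H}_{\mathcal{L}}^{[p]}$, and the dilation acts diagonally on coordinates by $(\delta_{c}a,\rho)=c^{|\rho|}(a,\rho)$. I would reduce the equivalence of two arbitrary continuous homogeneous norms to a comparison of each with a single concrete reference norm, exploiting compactness of a ``unit sphere'' in $\mathcal{G}_{\mathcal{L}}^{[p]}$.

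A convenient reference, parallel to the norm $\left(\ref{norm on GLN}\right)$ on $G_{\mathcal{L}}^{N}$, is
\[
N(a):=\max_{\rho\in F_{\mathcal{L}}^{[p]}}|(a,\rho)|^{1/|\rho|},
\]
which is continuous in the vector-space topology of $\mathcal{H}_{\mathcal{L}}^{[p]}$ and satisfies $N(\delta_{c}a)=cN(a)$. The central step is to show that
\[
S:=\left\{a\in\mathcal{G}_{\mathcal{L}}^{[p]}:N(a)=1\right\}
\]
is compact in the ambient finite-dimensional vector space. Boundedness is immediate from the definition of $N$, and closedness follows because $\mathcal{G}_{\mathcal{L}}^{[p]}$ is cut out of $\mathcal{H}_{\mathcal{L}}^{[p]}$ by the grouplike condition $\delta a=a\otimes a$, a system of polynomial equations in the finitely many coordinates $\{(a,\rho)\}_{\rho\in F_{\mathcal{L}}^{[p]}}$; together with continuity of $N$, this makes $S$ a closed bounded subset of the ambient finite-dimensional space.

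With compactness of $S$ in hand, the remainder is routine. Applying the homogeneity axiom to the identity element (which is fixed by every $\delta_{c}$) forces its norm to vanish, so any continuous homogeneous norm $\|\cdot\|$ is strictly positive on $S$ and, by compactness, attains a positive minimum $m$ and a finite maximum $M$ there. For any non-identity $a\in\mathcal{G}_{\mathcal{L}}^{[p]}$, write $a=\delta_{N(a)}b$ with $b\in S$; homogeneity then yields $mN(a)\leq\|a\|\leq MN(a)$. Applying this bound to any two continuous homogeneous norms and dividing gives their equivalence, with constants depending only on the two norms and on the data $p,d$ that determine $\mathcal{G}_{\mathcal{L}}^{[p]}$. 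The main obstacle I foresee is the geometric input ensuring that $S$ is closed and bounded in the ambient vector space; once this is secured, the dilation argument transfers the comparison from the compact sphere to the whole group mechanically.
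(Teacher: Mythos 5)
Your proposal is correct and is essentially the argument the paper has in mind: the paper's proof consists of the single remark that one proceeds as in \cite[Theorem 7.44]{friz2010multidimensional}, which is exactly the compactness-of-the-homogeneous-unit-sphere argument you spell out (reference norm, compact sphere via the polynomial grouplike constraints, positive minimum and finite maximum, then dilation). The only wording to tighten is the step ``its norm vanishes at the identity, so it is strictly positive on $S$'': strict positivity away from the identity is the definiteness property built into the definition of a homogeneous norm, not a consequence of vanishing at the identity.
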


\begin{proof}
The proof is similar to \cite[Theorem 7.44]{friz2010multidimensional}.
\end{proof}

\begin{lemma}
\label{Lemma bound 1-var of geodesic by omega}Let $x=\left( x^{\upsilon
_{1}},\dots ,x^{\upsilon _{K}}\right) :\left[ 0,1\right] \rightarrow 
\mathbb{R}
^{K}$ be a geodesic associated with $\bar{X}_{s,t}$. Then there exists $%
M_{p,d}>0$ such that%
\begin{equation*}
\left\Vert x^{\upsilon _{i}}\right\Vert _{1-var}\leq \left( M_{p,d}\right)
^{\left\vert \upsilon _{i}\right\vert }\left\Vert X\right\Vert _{p-var,\left[
s,t\right] }^{\left\vert \upsilon _{i}\right\vert }
\end{equation*}%
for every $\upsilon _{i}\in \mathcal{B}_{\mathcal{L}}^{\left[ p\right] }$.
\end{lemma}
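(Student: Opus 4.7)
The plan is to pull the bound back through a chain of comparisons: geodesic norm $\rightsquigarrow$ any other continuous homogeneous norm on $\mathcal{G}_{\mathcal{L}}^{[p]}$ $\rightsquigarrow$ the max norm coming from $X$ via \eqref{relation between X and Xbar 2} $\rightsquigarrow$ the $p$-variation norm.

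First, by Definition \ref{Definition geodesic norm}, since $x$ achieves the infimum, we have
\begin{equation*}
\sum_{j=1}^{K}\left\Vert x^{\upsilon _{j}}\right\Vert _{1-var}^{1/\left\vert \upsilon _{j}\right\vert }=\left\Vert \bar{X}_{s,t}\right\Vert ^{\prime }.
\end{equation*}
Every summand is nonnegative, so in particular $\|x^{\upsilon_i}\|_{1-var}^{1/|\upsilon_i|}\leq \|\bar{X}_{s,t}\|'$, which rearranges to $\|x^{\upsilon_i}\|_{1-var}\leq (\|\bar{X}_{s,t}\|')^{|\upsilon_i|}$. Thus the whole lemma reduces to bounding $\|\bar{X}_{s,t}\|'$ by a constant multiple of $\|X\|_{p\text{-}var,[s,t]}$.

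Second, I would define the auxiliary continuous homogeneous norm $\left\Vert a\right\Vert _{\max }:=\max_{\rho \in F_{\mathcal{L}}^{[p]}}\left\vert (a,\rho )\right\vert ^{1/\left\vert \rho \right\vert }$ on $\mathcal{G}_{\mathcal{L}}^{[p]}$, which is the direct analogue of \eqref{norm on GLN}. By Proposition \ref{Proposition equivalency of norms}, there is a constant $C_{p,d}>0$ with $\|\bar{X}_{s,t}\|'\leq C_{p,d}\|\bar{X}_{s,t}\|_{\max}$. Now the relation \eqref{relation between X and Xbar 2} together with $\sigma(\rho)\geq 1$ gives, for every $\rho\in F_{\mathcal{L}}^{[p]}$,
\begin{equation*}
\left\vert (\bar{X}_{s,t},\rho )\right\vert ^{1/\left\vert \rho \right\vert }=\Bigl(\tfrac{\left\vert (X_{s,t},\rho )\right\vert }{\sigma (\rho )}\Bigr)^{1/\left\vert \rho \right\vert }\leq \left\vert (X_{s,t},\rho )\right\vert ^{1/\left\vert \rho \right\vert }\leq \|X_{s,t}\|,
\end{equation*}
so $\|\bar{X}_{s,t}\|_{\max}\leq \|X_{s,t}\|$. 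Finally, by the very definition of the $p$-variation norm, the trivial partition $\{s,t\}$ yields $\|X_{s,t}\|\leq \|X\|_{p\text{-}var,[s,t]}$.

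Chaining these estimates gives $\|\bar{X}_{s,t}\|'\leq C_{p,d}\|X\|_{p\text{-}var,[s,t]}$, and combining with the geodesic inequality from the first paragraph,
\begin{equation*}
\left\Vert x^{\upsilon _{i}}\right\Vert _{1-var}\leq \left( \left\Vert \bar{X}_{s,t}\right\Vert ^{\prime }\right) ^{\left\vert \upsilon _{i}\right\vert }\leq \bigl( C_{p,d}\bigr) ^{\left\vert \upsilon _{i}\right\vert }\left\Vert X\right\Vert _{p\text{-}var,\left[ s,t\right] }^{\left\vert \upsilon _{i}\right\vert },
\end{equation*}
so one can take $M_{p,d}:=C_{p,d}$.

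No step looks delicate: the only nontrivial input is Proposition \ref{Proposition equivalency of norms} (equivalence of continuous homogeneous norms), and the mildest potential snag is verifying that $\|\cdot\|_{\max}$ really is a continuous homogeneous norm on $\mathcal{G}_{\mathcal{L}}^{[p]}$—homogeneity is immediate from the definition of $\delta_c$, and continuity follows because $(a,\rho)$ is a polynomial evaluation. Everything else is direct from the definitions and the identity \eqref{relation between X and Xbar 2}.
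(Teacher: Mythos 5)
Your proof is correct and follows essentially the same route as the paper's (very terse) proof: define the max norm $\|a\|_{1}=\max_{\rho}|(a,\rho)|^{1/|\rho|}$ on $\mathcal{G}_{\mathcal{L}}^{[p]}$, invoke Proposition \ref{Proposition equivalency of norms} to compare it with $\|\cdot\|'$, and use $(\bar{X}_{s,t},\rho)=(X_{s,t},\rho)/\sigma(\rho)$ together with $\sigma(\rho)\geq 1$ and the trivial-partition bound $\|X_{s,t}\|\leq\|X\|_{p\text{-}var,[s,t]}$. You have simply filled in the details the paper leaves implicit.
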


\begin{proof}
Define a norm on $\mathcal{G}_{\mathcal{L}}^{\left[ p\right] }$ as $%
\left\Vert a\right\Vert _{1}:=\max_{\rho \in F_{\mathcal{L}}^{\left[ p\right]
}}\left\vert \left( a,\rho \right) \right\vert ^{\frac{1}{\left\vert \rho
\right\vert }}$. Based on the definition of $\left\Vert \cdot \right\Vert
^{\prime }$, equivalency of continuous homogeneous norms as in Proposition %
\ref{Proposition equivalency of norms} and that $\left( \bar{X}_{s,t},\rho
\right) =\left( X_{s,t},\rho \right) /\sigma \left( \rho \right) $, the
proposed inequality holds.
\end{proof}

\begin{notation}
Let $\mathcal{W}$ denote the set of finite sequences $t_{1}\cdots t_{k}$ of $%
t_{i}\in \mathcal{B}_{\mathcal{L}}^{\left[ p\right] }$, including the empty
sequence denoted as $\eta $. The degree $\left\vert w\right\vert $ of $%
w=t_{1}\cdots t_{k}$ is $\left\vert t_{1}\right\vert +\cdots +\left\vert
t_{k}\right\vert $. The degree of $\eta $ is $0$.
\end{notation}

\begin{lemma}
\label{Lemma bound on Tn}Let $T_{n}$ denote the number of elements in $%
\mathcal{W}$ of degree $n$. Then there exists $K_{p}\geq 1$ such that for $%
n=1,2,3,\dots $ 
\begin{equation*}
T_{n}\leq \left( K_{p}d\right) ^{n}.
\end{equation*}
\end{lemma}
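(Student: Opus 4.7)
The plan is to obtain the bound by strong induction on $n$ from a linear recursion on the sequence $T_n$, after first bounding the number of labeled generators of each degree. Let $b_k$ denote the number of elements of $\mathcal{B}_{\mathcal{L}}$ of degree exactly $k$. Since there are only finitely many (isomorphism classes of) rooted trees on $k$ vertices and each vertex carries one of $d$ labels from $\mathcal{L}$, we have $b_k \leq c_k\, d^k$, where $c_k$ is a constant depending only on $k$ (it is at most the number of unlabeled trees of degree $k$ in $\mathcal{B}$, times the crude bound $d^k$ for the labelings).

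Next I would set up the recursion for $T_n$. By decomposing a word $w = t_1 \cdots t_k \in \mathcal{W}$ of degree $n$ according to the degree $j = \left\vert t_1 \right\vert$ of its first letter, I obtain
\begin{equation*}
T_n \;=\; \sum_{j=1}^{\min(n,\,[p])} b_j\, T_{n-j}, \qquad T_0 := 1.
\end{equation*}
With $b_j \leq c_j d^j$ this gives $T_n \leq \sum_{j=1}^{[p]} c_j d^j \, T_{n-j}$.

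Now I would choose $K_p \geq 1$ large enough, depending only on $p$ (through the constants $c_1,\dots,c_{[p]}$), so that
\begin{equation*}
\sum_{j=1}^{[p]} c_j\, K_p^{-j} \;\leq\; 1.
\end{equation*}
The base case $T_1 \leq K_p d$ is clear provided $K_p$ is chosen at least $c_1$. For the induction step, assuming $T_m \leq (K_p d)^m$ for all $m < n$, the recursion gives
\begin{equation*}
T_n \;\leq\; \sum_{j=1}^{[p]} c_j\, d^j (K_p d)^{n-j} \;=\; (K_p d)^n \sum_{j=1}^{[p]} c_j K_p^{-j} \;\leq\; (K_p d)^n,
\end{equation*}
which closes the induction. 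There is no real obstacle here: the only care needed is to make sure $K_p$ depends on $p$ alone (not on $d$), which is automatic once $b_k$ is separated as $c_k\, d^k$ so that the factor $d^k$ is absorbed into $(K_p d)^n$ and only the $p$-dependent constants $c_k$ remain in the condition defining $K_p$.
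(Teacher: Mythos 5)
Your proof is correct and follows essentially the same route as the paper: decompose each word by its first letter to get the recursion $T_n \leq \sum_{j=1}^{[p]} l_j d^j T_{n-j}$ (your $c_j$ is the paper's $l_j$, the number of unlabeled generators of degree $j$), choose $K_p$ with $\sum_j l_j K_p^{-j} \leq 1$, and induct. The separation of the $d^j$ factor so that $K_p$ depends only on $p$ is exactly how the paper handles it as well.
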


\begin{proof}
Recall that $\mathcal{B}$ denotes the collection of trees that freely
generate the Grossman Larson algebra. For $i=1,2,\dots ,\left[ p\right] $,
let $l_{i}\ $denote the number of trees in $\mathcal{B}$ of degree $i$. Then 
$T_{n}\leq \sum_{i=1}^{\left[ p\right] }T_{n-i}l_{i}d^{i}$. Set $T_{0}=1$
and $T_{-n}=0$ for $n=1,2,\dots \left[ p\right] $. For $p\geq 1$, let $%
K_{p}\geq 1$ be a number such that $\sum_{i=1}^{\left[ p\right] }l_{i}\left(
K_{p}\right) ^{-i}\leq 1$. Then it can be proved inductively $T_{n}\leq
\left( K_{p}d\right) ^{n}$.
\end{proof}

Define $I\left( x\right) :=x$ for $x\in 
\mathbb{R}
^{e}$.

\begin{notation}
For $t\in \mathcal{B}_{\mathcal{L}}^{\left[ p\right] }$ and $w\in \mathcal{W}
$, denote $F^{\eta }:=I$, $F^{t}:=f\left( t\right) $ as at $\left( \ref%
{Definition of f(tau)}\right) $ and 
\begin{equation*}
F^{tw}:=dF^{w}\left( f\left( t\right)\right).
\end{equation*}
\end{notation}

\begin{notation}
With $f\left( t_{i}\right) $ defined at $\left( \ref{Definition of f(tau)}%
\right) $, let $\psi _{f}$ denote the $%
\mathbb{R}
$-linear map from $%
\mathbb{R}
F_{\mathcal{L}}$ to differential operators, given by $\psi _{f}\left(
t_{1}\cdots t_{k}\right) \left( \varphi \right) :=d^{k}\varphi \left(
f\left( t_{1}\right) \cdots f\left( t_{k}\right) \right) $ for $t_{i}\in T_{%
\mathcal{L}}$ and smooth $\varphi :%
\mathbb{R}
^{e}\rightarrow 
\mathbb{R}
^{e}$.
\end{notation}

For trees $t_{i}$ and a forest $\rho $, define $\left( t_{1}\cdots
t_{k}\right) \curvearrowright \rho $ as the sum of $\left\vert \rho
\right\vert ^{k}$ forests that are obtained by linking each of the roots of $%
t_{i},i=1,\dots ,k$ to a vertex of $\rho $ by a new edge. Recall that $\ast $
denotes the product in the Grossman Larson Hopf algebra (we delete the
additional root in \cite{grossman1989hopf}). Then for trees $t$ and $t_{i}$, 
$t\ast \left( t_{1}\cdots t_{k}\right) =tt_{1}\cdots t_{k}+t\curvearrowright
\left( t_{1}\cdots t_{k}\right) $.

\begin{lemma}
\label{Lemma expression of Ft1...tk}With $f\left( t\right) $ defined at $%
\left( \ref{Definition of f(tau)}\right) $, for $t_{i}\in T_{\mathcal{L}%
},i=1,\dots ,k$,%
\begin{equation*}
F^{t_{1}\cdots t_{k}}=f\left( t_{1}\curvearrowright \left(
t_{2}\curvearrowright \cdots \left( t_{k-1}\curvearrowright t_{k}\right)
\right) \right) =\psi _{f}\left( t_{1}\ast \cdots \ast t_{k}\right) \left(
I\right) .
\end{equation*}
\end{lemma}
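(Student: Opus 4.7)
The plan is to prove the two equalities in sequence. First I would establish by induction on $k$ that $F^{t_1 \cdots t_k} = f(t_1 \curvearrowright (t_2 \curvearrowright \cdots \curvearrowright t_k))$, the key ingredient being the single-tree identity $df(\tau)(f(t)) = f(t \curvearrowright \tau)$ for $\tau, t \in T_{\mathcal{L}}$. Second I would identify the single-tree component of the Grossman Larson product $t_1 \ast \cdots \ast t_k$ and show that the multi-tree components are annihilated by $\psi_f(\cdot)(I)$, which forces $\psi_f(t_1 \ast \cdots \ast t_k)(I)$ to collapse onto $f(t_1 \curvearrowright \cdots \curvearrowright t_k)$.

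The preliminary identity $df(\tau)(f(t)) = f(t \curvearrowright \tau)$ I would prove by induction on $|\tau|$. When $\tau = \bullet_a$ it reduces to $df_a(f(t)) = f([t]_a)$, which is the definition. For $\tau = [\tau_1 \cdots \tau_j]_a$, Leibniz's rule applied to $y \mapsto d^j f_a|_y(f(\tau_1)(y), \ldots, f(\tau_j)(y))$ yields
$$df(\tau)(f(t)) = d^{j+1}f_a(f(t), f(\tau_1), \ldots, f(\tau_j)) + \sum_{i=1}^{j} d^j f_a(f(\tau_1), \ldots, df(\tau_i)(f(t)), \ldots, f(\tau_j)).$$
Invoking the inductive hypothesis to rewrite each $df(\tau_i)(f(t)) = f(t \curvearrowright \tau_i)$ and then matching with the grafting decomposition $t \curvearrowright [\tau_1 \cdots \tau_j]_a = [t\tau_1 \cdots \tau_j]_a + \sum_i [\tau_1 \cdots (t \curvearrowright \tau_i) \cdots \tau_j]_a$ (attach the root of $t$ either at the root of $\tau$ or at a strictly deeper vertex) closes the induction. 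The main induction on $k$ is then routine: by definition $F^{t_1 t_2 \cdots t_k} = dF^{t_2 \cdots t_k}(f(t_1))$, which by the inductive hypothesis and linearity of $df$ equals $df(\sigma)(f(t_1)) = f(t_1 \curvearrowright \sigma)$ for $\sigma := t_2 \curvearrowright \cdots \curvearrowright t_k$.

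For the second equality, since $I$ is linear we have $d^m I = 0$ for $m \geq 2$, while $d^1 I$ is the identity. Hence $\psi_f(\rho)(I) = 0$ for every forest $\rho$ with at least two trees, and $\psi_f(\sigma)(I) = f(\sigma)$ for a single tree $\sigma$. It therefore suffices to show that the part of $t_1 \ast \cdots \ast t_k$ supported on single trees equals $t_1 \curvearrowright (t_2 \curvearrowright \cdots \curvearrowright t_k)$. This follows by induction on $k$ using the identity $t \ast (s_1 \cdots s_m) = t s_1 \cdots s_m + t \curvearrowright (s_1 \cdots s_m)$ recorded in the excerpt: the first summand has $m+1$ trees and the second has $m$ trees, so a single tree can arise only from $t_1 \ast \sigma$ with $\sigma$ a single-tree summand of $t_2 \ast \cdots \ast t_k$, contributing $t_1 \curvearrowright \sigma$. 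The main obstacle is verifying the combinatorial dictionary in the preliminary step, namely that Leibniz's extra derivative slot corresponds to attaching $t$ at the root of $\tau$, while the interior Leibniz terms correspond bijectively to attaching $t$ at a vertex strictly below the root; once this bookkeeping is in place, everything else is an unwinding of definitions.
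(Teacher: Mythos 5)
Your proof is correct and follows essentially the same route as the paper's: the first equality rests on the grafting identity $df\left( \tau \right) \left( f\left( t\right) \right) =f\left( t\curvearrowright \tau \right) $ (which the paper merely asserts and you prove in full by a Leibniz induction), and the second on the observation that $\psi _{f}\left( \cdot \right) \left( I\right) $ annihilates every forest with two or more trees and therefore selects exactly the single-tree component of $t_{1}\ast \cdots \ast t_{k}$. The paper identifies that component via the identity $t_{1}\curvearrowright \left( \rho \curvearrowright t_{2}\right) =\left( t_{1}\ast \rho \right) \curvearrowright t_{2}$ whereas you peel it off from the left by counting trees in each summand; this is a cosmetic difference only.
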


\begin{proof}
Since $df\left( t_{2}\right) f\left( t_{1}\right) =f\left(
t_{1}\curvearrowright t_{2}\right) $ for $t_{1},t_{2}\in T_{\mathcal{L}}$,
the first equality holds. For trees $t_{1},t_{2}$ and a forest $\rho $, $%
t_{1}\curvearrowright \left( \rho \curvearrowright t_{2}\right) =\left(
t_{1}\ast \rho \right) \curvearrowright t_{2}$. Then it can be proved
inductively that, for $t_{i}\in T_{\mathcal{L}}$, $t_{1}\curvearrowright
\left( t_{2}\curvearrowright \cdots \left( t_{k-1}\curvearrowright
t_{k}\right) \right) =\left( t_{1}\ast t_{2}\ast \cdots \ast t_{k-1}\right)
\curvearrowright t_{k}$. Then the second equality holds based on 
\begin{equation*}
f\left( \left( t_{1}\ast t_{2}\ast \cdots \ast t_{k-1}\right)
\curvearrowright t_{k}\right) =\psi _{f}\left( t_{1}\ast t_{2}\ast \cdots
\ast t_{k-1}\ast t_{k}\right) \left( I\right) .
\end{equation*}
\end{proof}

For $\gamma >p\geq 1$, suppose $X:\left[ 0,T\right] \rightarrow G_{\mathcal{L%
}}^{\left[ p\right] }$ is a branched $p$-rough path and suppose $f:%
\mathbb{R}
^{e}\rightarrow L\left( 
\mathbb{R}
^{d},%
\mathbb{R}
^{e}\right) $ is $\limfunc{Lip}\left( \gamma \right) $. Define $\omega
:\left\{ \left( s,t\right) |0\leq s\leq t\leq T\right\} \rightarrow \lbrack
0,\infty )$ as 
\begin{equation*}
\omega \left( s,t\right) :=\left\Vert f\right\Vert _{\limfunc{Lip}\left(
\gamma \right) }^{p}\left\Vert X\right\Vert _{p-var,\left[ s,t\right] }^{p}.
\end{equation*}%
By rescaling $\left\Vert f\right\Vert _{\limfunc{Lip}\left( \gamma \right)
}^{-1}f$ and $\delta _{\left\Vert f\right\Vert _{\limfunc{Lip}\left( \gamma
\right) }}X$, we assume $\left\Vert f\right\Vert _{\limfunc{Lip}\left(
\gamma \right) }=1$.

Denote $N:=\lfloor \gamma \rfloor $ and $\left\{ \gamma \right\} :=\gamma
-\lfloor \gamma \rfloor $.

\begin{lemma}
\label{Lemma for bound of composition of vector fields}For $w\in \mathcal{W}%
,\left\vert w\right\vert \leq N$, 
\begin{equation*}
\left\Vert F^{w}\left( y_{1}\right) -F^{w}\left( y_{2}\right) \right\Vert
\leq \left\vert w\right\vert !\left\Vert y_{1}-y_{2}\right\Vert
\end{equation*}
for $y_{i}\in 
\mathbb{R}
^{e}$. For $w\in \mathcal{W},\left\vert w\right\vert <N$ and $t\in $ $%
\mathcal{B}_{\mathcal{L}}^{\left[ p\right] }$, 
\begin{equation*}
\sup_{y\in 
\mathbb{R}
^{e}}\left\Vert F^{tw}\left( y\right) \right\Vert \leq \left( N-1\right) !
\end{equation*}
\end{lemma}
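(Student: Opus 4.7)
The plan is to read both inequalities off the closed-form representation $F^{t_{1}\cdots t_{k}}=f(t_{1}\curvearrowright \cdots \curvearrowright t_{k})$ supplied by Lemma \ref{Lemma expression of Ft1...tk}, together with the elementary fact that $f$ being $\limfunc{Lip}(\gamma )$ with $\gamma >N$ gives the uniform bound $\|d^{j}f\|\leq \|f\|_{\limfunc{Lip}(\gamma )}=1$ for every $0\leq j\leq N$.

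For the first inequality, I would first expand $F^{w}$ as a sum of $f(\tau )$ over trees $\tau $ produced by the iterated grafting $t_{1}\curvearrowright \cdots \curvearrowright t_{k}$. A direct count of the grafting possibilities gives
\begin{equation*}
\#\{\text{summands}\}=\prod_{i=2}^{k}(|t_{i}|+\cdots +|t_{k}|)\leq \prod_{i=2}^{k}(|w|-i+1)\leq (|w|-1)!,
\end{equation*}
since each $|t_{i}|\geq 1$. Next, for any tree $\tau $ with $|\tau |\leq N$, a short induction along $\left( \ref{Definition of f(tau)}\right) $ yields $\|f(\tau )(y)\|\leq 1$ pointwise (each vertex with $m$ children contributes $\|d^{m}f_{a}\|\leq 1$, with $m\leq |\tau |-1\leq N-1$). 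Differentiating $f(\tau )$ once, the product rule produces $|\tau |$ summands in each of which the order of one derivative of $f$ is raised by a single step---still $\leq N$---so the same inductive scheme gives $\|df(\tau )\|_{\mathrm{op}}\leq |\tau |$. Summing over the $(|w|-1)!$ grafting terms and applying the mean value theorem, $F^{w}$ is Lipschitz with constant at most $(|w|-1)!\cdot |w|=|w|!$, which is the claimed estimate.

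For the second inequality, I would unfold the recursive definition $F^{tw}(y)=dF^{w}(y)(f(t)(y))$, which splits the bound into pieces already controlled. With $|w|<N$, the work above gives $\sup _{y}\|dF^{w}(y)\|_{\mathrm{op}}\leq |w|!$, and with $|t|\leq [p]\leq N$ the same pointwise bound gives $\|f(t)(y)\|\leq 1$. Multiplying the two and using $|w|!\leq (N-1)!$ finishes the proof.

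The main technical nuisance is the combinatorial counting: bounding the number of trees appearing in $t_{1}\curvearrowright \cdots \curvearrowright t_{k}$ by $(|w|-1)!$, and tracking the $|\tau |$ summands produced by the product rule applied to $f(\tau )$. Once these counts are in hand, the $\limfunc{Lip}(\gamma )$ hypothesis suffices: the derivatives of $f$ needed never exceed order $N$, even for the second inequality where $|tw|$ may itself exceed $N$, because the factorization $F^{tw}=dF^{w}\cdot f(t)$ separates the contributions of $w$ and $t$ so that each uses only derivatives of $f$ up to order $\max(|w|,|t|-1)\leq N$.
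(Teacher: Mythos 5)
Your argument is correct and matches the paper's proof in all essentials: the same expansion of $F^{w}$ via Lemma \ref{Lemma expression of Ft1...tk} into at most $(|w|-1)!$ trees, the same product-rule count giving $\|df(\tau )\|\leq |\tau |$ from the uniform bound $1$ on derivatives of $f$ up to order $N$, and the same conclusion $|w|!$. For the second inequality you factor $F^{tw}=dF^{w}(f(t))$ and reuse the Lipschitz bound on $F^{w}$, whereas the paper counts the (at most $|w|!$) trees in $F^{tw}$ directly and bounds each by $1$; this is only a cosmetic repackaging of the same estimate.
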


\begin{proof}
All trees here are labeled by $\mathcal{L}=\left\{ 1,2,\dots ,d\right\} $.
Based on Lemma \ref{Lemma expression of Ft1...tk}, $F^{t_{1}\cdots
t_{k}}=f\left( t_{1}\curvearrowright \left( t_{2}\curvearrowright \cdots
\left( t_{k-1}\curvearrowright t_{k}\right) \right) \right) $. Then $%
F^{t_{1}\cdots t_{k}}$ is the sum of the image of 
\begin{equation*}
\left\vert t_{k}\right\vert \left( \left\vert t_{k}\right\vert +\left\vert
t_{k-1}\right\vert \right) \cdots \left( \left\vert t_{k}\right\vert
+\left\vert t_{k-1}\right\vert +\cdots +\left\vert t_{2}\right\vert \right)
\end{equation*}%
trees. Hence, for $w\in \mathcal{W}$, the number of trees in $F^{w}$ is
bounded by $\left( \left\vert w\right\vert -1\right) !$. Each of these trees 
$t$ is of degree $\left\vert w\right\vert $ and corresponds to $f\left(
t\right) :%
\mathbb{R}
^{e}\rightarrow 
\mathbb{R}
^{e}$ that is at least $\limfunc{Lip}\left( 1+\left\{ \gamma \right\}
\right) $ as $\left\vert w\right\vert \leq N$. Then $df\left( t\right) $ is
a sum of $\left\vert w\right\vert $ terms, as the differential $d$ chooses a
vertex in $t$. Hence, $df\left( t\right) $ is bounded by $\left\vert
w\right\vert $, because $f$ and its derivatives of order up to $N$ are
uniformly bounded by $1$ (we rescaled $f$ by $\left\Vert f\right\Vert _{%
\limfunc{Lip}\left( \gamma \right) }^{-1}$). As a result, for each tree $t$
of degree $\left\vert w\right\vert $, $\left\Vert f\left( t\right) \left(
y_{1}\right) -f\left( t\right) \left( y_{2}\right) \right\Vert \leq
\left\Vert df\left( t\right) \right\Vert _{\infty }\left\Vert
y_{1}-y_{2}\right\Vert \leq \left\vert w\right\vert \left\Vert
y_{1}-y_{2}\right\Vert $. Then the first estimate follows, as there are at
most $\left( \left\vert w\right\vert -1\right) !$ such trees in $F^{w}$.

For $w\in \mathcal{W},\left\vert w\right\vert <N$ and $t\in $ $\mathcal{B}_{%
\mathcal{L}}^{\left[ p\right] }$, the number of trees in $F^{tw}$ is bounded
by $\left\vert w\right\vert !\leq \left( N-1\right) !$. Each tree
corresponds to a map that is bounded on $%
\mathbb{R}
^{e}$ by $1$.
\end{proof}

Recall that $\mathcal{B}_{\mathcal{L}}^{\left[ p\right] }=\left\{ \upsilon
_{1},\dots ,\upsilon _{K}\right\} $. For $s\leq t$, let $x=\left(
x^{\upsilon _{1}},\dots ,x^{\upsilon _{K}}\right) :\left[ s,t\right]
\rightarrow 
\mathbb{R}
^{K}$ be a geodesic associated with $\bar{X}_{s,t}$. With $f\left( \upsilon
_{i}\right) $ defined at $\left( \ref{Definition of f(tau)}\right) $, let $%
y^{s,t}$ denote the unique solution of the ODE 
\begin{equation*}
dy_{u}^{s,t}=\sum_{i=1}^{K}f\left( \upsilon _{i}\right) \left(
y_{u}^{s,t}\right) dx_{u}^{\upsilon _{i}},y_{s}^{s,t}=y_{s},
\end{equation*}%
where $y$ denotes the unique solution of the branched RDE 
\begin{equation*}
dy_{t}=f\left( y_{t}\right) dX_{t},y_{0}=\xi .
\end{equation*}%
The existence and uniqueness of $y$ is based on \cite[Theorem 22]%
{lyons2015theory}.

\begin{lemma}
\label{Lemma expression of Taylor expansion}For $w\in \mathcal{W}$, $%
\left\vert w\right\vert =N-\left[ p\right] ,\dots ,N-1$, 
\begin{eqnarray*}
&&F^{w}\left( y_{t}^{s,t}\right) -F^{w}\left( y_{s}\right) \\
&&-\dsum\limits_{\left\vert t_{1}\right\vert +\cdots +\left\vert
t_{k}\right\vert =1,\dots ,N-\left\vert w\right\vert }F^{t_{1}\cdots
t_{k}w}\left( y_{s}\right) \idotsint\limits_{s<u_{1}<\cdots
<u_{k}<t}dx_{u_{1}}^{t_{1}}\cdots dx_{u_{k}}^{t_{k}} \\
&=&\dsum\limits_{\left\vert t_{1}\right\vert +\cdots +\left\vert
t_{k}\right\vert =N-\left\vert w\right\vert }\quad
\idotsint\limits_{s<u_{1}<\cdots <u_{k}<t}\left( F^{t_{1}\cdots
t_{k}w}\left( y_{u_{1}}^{s,t}\right) -F^{t_{1}\cdots t_{k}w}\left(
y_{s}\right) \right) dx_{u_{1}}^{t_{1}}\cdots dx_{u_{k}}^{t_{k}} \\
&&+\sum_{\substack{ \left\vert t_{2}\right\vert +\cdots +\left\vert
t_{k}\right\vert <N-\left\vert w\right\vert  \\ \left\vert t_{1}\right\vert
+\left\vert t_{2}\right\vert +\cdots +\left\vert t_{k}\right\vert
>N-\left\vert w\right\vert }}\,\;\idotsint\limits_{s<u_{1}<\cdots
<u_{k}<t}F^{t_{1}\cdots t_{k}w}\left( y_{u_{1}}^{s,t}\right)
dx_{u_{1}}^{t_{1}}\cdots dx_{u_{k}}^{t_{k}}
\end{eqnarray*}%
where $t_{i},i=1,2,\dots \ $range over elements in $\mathcal{B}_{\mathcal{L}%
}^{\left[ p\right] }$.
\end{lemma}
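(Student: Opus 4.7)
The plan is to iterate the first-order identity arising from the fact that $y^{s,t}$ is the classical ODE solution driven by the smooth path $x$. Since $y^{s,t}$ satisfies $dy^{s,t}_u=\sum_i f(\upsilon_i)(y^{s,t}_u)\,dx^{\upsilon_i}_u$, and since $dF^{w'}(\cdot)\,f(\tau)=F^{\tau w'}(\cdot)$ by the defining recursion $F^{\tau w'}:=dF^{w'}(f(\tau))$, the classical chain rule yields, for every $w'\in\mathcal{W}$ with $|w'|<N$ and every $r\in[s,t]$,
\begin{equation*}
F^{w'}(y^{s,t}_r)-F^{w'}(y_s)=\sum_{\tau\in\mathcal{B}_{\mathcal{L}}^{\left[p\right]}}\int_s^r F^{\tau w'}(y^{s,t}_u)\,dx^{\tau}_u.
\end{equation*}
The restriction $|w'|<N$ ensures, via Lemmas \ref{Lemma expression of Ft1...tk} and \ref{Lemma for bound of composition of vector fields}, that $F^{w'}$ is at least $C^1$, so the chain rule is legal. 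This identity will serve as the single building block.

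The proof then iteratively applies this building block to $F^w(y^{s,t}_t)-F^w(y_s)$. A generic term produced along the way has the form $\idotsint F^{t_1\cdots t_k w}(y^{s,t}_{u_1})\,dx^{t_1}_{u_1}\cdots dx^{t_k}_{u_k}$ over the simplex $s<u_1<\cdots<u_k<t$; write $v:=t_1\cdots t_k$ for its sub-word preceding $w$. The expansion rule is: if $|v|<N-|w|$, split $F^{vw}(y^{s,t}_{u_1})=F^{vw}(y_s)+(F^{vw}(y^{s,t}_{u_1})-F^{vw}(y_s))$, record the first piece as a leading term of degree $|v|$, and re-expand the difference by the building block (legal because $|vw|<N$); if $|v|=N-|w|$, perform the same split but stop, sending the leading part to the subtracted sum on the left and keeping the difference as the first (type-1) remainder; if $|v|>N-|w|$ --- which can only arise after expanding a sub-word $v'$ with $|v'|<N-|w|$ whose newly added leftmost letter $t_1$ pushes the total degree past $N-|w|$ --- leave the term as the second (type-2) remainder without splitting. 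Each expansion strictly increases $|v|$ by at least one and expansion halts once $|v|\geq N-|w|$, so the recursion terminates in finitely many rounds.

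It remains to match the collected terms with the three sums in the statement. The leading terms run over all sub-words $v$ with $1\leq|v|\leq N-|w|$ exactly once, reconstituting the subtracted sum on the left-hand side. The stopped terms at $|v|=N-|w|$ form the first remainder. The stopped terms at $|v|>N-|w|$ inherit from the expansion policy the automatic side condition $|t_2\cdots t_k|=|v'|<N-|w|$, matching the second remainder verbatim. The step that needs most care is precisely this last bookkeeping check: verifying that the side condition $|t_2\cdots t_k|<N-|w|$ in the type-2 sum is nothing more than the trace of the rule ``only expand while the current sub-word has degree $<N-|w|$'', and that the leading-term and difference-term summands at degree $|v|=N-|w|$ assemble correctly with no double counting. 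Beyond that, the argument is classical Newton--Leibniz calculus applied one level at a time.
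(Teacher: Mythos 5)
Your proposal is correct and follows exactly the route the paper intends: the paper's proof is the single sentence ``the equality can be obtained by iteratively applying the fundamental theorem of calculus,'' and your building block (Newton--Leibniz along the ODE for $y^{s,t}$, prepending one letter per application) together with the expand/stop policy governed by $\left\vert v\right\vert $ versus $N-\left\vert w\right\vert $ is precisely that iteration, with the degree bookkeeping for the two remainder sums carried out correctly.
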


\begin{proof}
The equality can be obtained by iteratively applying the fundamental theorem
of calculus.
\end{proof}

\begin{lemma}
\label{Lemma uniform bound on yst}%
\begin{equation*}
\sup_{u\in \left[ s,t\right] }\left\Vert y_{u}^{s,t}-y_{s}\right\Vert \leq
C\left( p,d,\omega \left( 0,T\right) \right) \omega \left( s,t\right) ^{%
\frac{1}{p}}
\end{equation*}
\end{lemma}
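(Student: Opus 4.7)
The plan is to exploit the fact that the driving signal $x$ in the ODE for $y^{s,t}$ has bounded variation controlled by the rough path norm through Lemma \ref{Lemma bound 1-var of geodesic by omega}, combined with a uniform sup bound on the vector fields $f(\upsilon_i)$, so that Gronwall is not needed and one can integrate directly.

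First I would establish that each $f(\upsilon_i):\mathbb{R}^e\rightarrow\mathbb{R}^e$ is uniformly bounded on $\mathbb{R}^e$ by a constant depending only on $p$. Since $\upsilon_i$ is a single tree, this follows by induction on the tree structure from the recursive definition $\left( \ref{Definition of f(tau)}\right) $: at each vertex of $\upsilon_i$ with $k$ children, the inductive step applies a $k$-th derivative of some component $f_a$, where $k\leq|\upsilon_i|-1\leq[p]-1\leq N-1<\gamma$. After the rescaling $\|f\|_{\limfunc{Lip}(\gamma)}=1$, all such derivatives are pointwise bounded by $1$, so inductively $\|f(\upsilon_i)\|_\infty\leq1$. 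Alternatively, one may invoke Lemma \ref{Lemma for bound of composition of vector fields} with the empty word $w=\eta$ to obtain $\sup_y\|f(\upsilon_i)(y)\|\leq(N-1)!$.

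Next, from the integral form of the ODE defining $y^{s,t}$, for any $u\in[s,t]$,
$$
\|y_u^{s,t}-y_s\|\leq\sum_{i=1}^K\int_s^u\|f(\upsilon_i)(y_v^{s,t})\|\,d|x^{\upsilon_i}|_v\leq C_1(p)\sum_{i=1}^K\|x^{\upsilon_i}\|_{1-var,[s,t]}.
$$
Applying Lemma \ref{Lemma bound 1-var of geodesic by omega} and using $\omega(s,t)=\|X\|_{p-var,[s,t]}^p$ (which holds after the $\|f\|_{\limfunc{Lip}(\gamma)}=1$ rescaling), we obtain
$$
\|y_u^{s,t}-y_s\|\leq C_1(p)\sum_{i=1}^K(M_{p,d})^{|\upsilon_i|}\omega(s,t)^{|\upsilon_i|/p}.
$$

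Finally, since $|\upsilon_i|\geq1$, I would factor out $\omega(s,t)^{1/p}$ from each summand and bound the leftover $\omega(s,t)^{(|\upsilon_i|-1)/p}$ by $\omega(0,T)^{(|\upsilon_i|-1)/p}$ using monotonicity of $\omega$. Since $K$ depends only on $p,d$, the remaining sum $\sum_i(M_{p,d})^{|\upsilon_i|}\omega(0,T)^{(|\upsilon_i|-1)/p}$ is a finite constant $C(p,d,\omega(0,T))$, giving the stated bound. There is no substantial obstacle here: the lemma is essentially a direct assembly of the two preceding technical lemmas, with the only care needed being the clean separation of the $\omega(s,t)^{1/p}$ factor from the dependence on the degrees $|\upsilon_i|$.
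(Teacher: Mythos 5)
Your proposal is correct and follows essentially the same route as the paper: bound the vector fields $f\left( \upsilon _{i}\right) $ uniformly using the rescaling $\left\Vert f\right\Vert _{\limfunc{Lip}\left( \gamma \right) }=1$, integrate the ODE against the geodesic components, invoke Lemma \ref{Lemma bound 1-var of geodesic by omega}, and absorb the excess powers of $\omega $ into a constant depending on $\omega \left( 0,T\right) $. The paper's own proof is a terser version of exactly this argument, so there is nothing further to add.
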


\begin{proof}
Since $K$ is the number of elements in $\mathcal{B}_{\mathcal{L}}^{\left[ p%
\right] }$ with $\mathcal{L}=\left\{ 1,\dots ,d\right\} $, $K$ only depends
on $p,d$. Since $\left\Vert f\right\Vert _{\limfunc{Lip}\left( \gamma
\right) }=1$, based on Lemma \ref{Lemma bound 1-var of geodesic by omega}, 
\begin{equation*}
\sup_{u\in \left[ s,t\right] }\left\Vert y_{u}^{s,t}-y_{s}\right\Vert \leq
\sum_{i=1}^{K}\left\Vert x^{\upsilon _{i}}\right\Vert _{1-var}\leq C\left(
p,d,\omega \left( 0,T\right) \right) \omega \left( s,t\right) ^{\frac{1}{p}}.
\end{equation*}
\end{proof}

Recall that $\ast $ denotes the product of the Grossman Larson Hopf algebra.

\begin{notation}
\label{Notation TX}Define $T^{X}\ $as%
\begin{equation*}
\left( T_{s,t}^{X},t_{1}\cdots t_{k}\right) :=\left( \bar{X}_{s,t},t_{1}\ast
\cdots \ast t_{k}\right)
\end{equation*}%
for $s\leq t$ and $t_{1}\cdots t_{k}\in \mathcal{W}$ for $t_{i}\in \mathcal{B%
}_{\mathcal{L}}^{\left[ p\right] },\left\vert t_{1}\right\vert +\cdots
+\left\vert t_{k}\right\vert \leq \left[ p\right] $.
\end{notation}

Denote $\beta _{p}:=p^{2}\left( 1+\sum_{n\geq 2}\left( \frac{2}{n}\right) ^{%
\frac{\left[ p\right] +1}{p}}\right) $.

\begin{lemma}
\label{Lemma initial estimate induction}Denote $\tilde{\omega}:=\left(
K_{p}d\right) ^{p}\omega $. For $w\in \mathcal{W}$, $\left\vert w\right\vert
=N-\left[ p\right] ,\dots ,N-1$,%
\begin{eqnarray*}
&&\left\Vert F^{w}\left( y_{t}^{s,t}\right) -F^{w}\left( y_{s}\right)
-\sum_{l\in \mathcal{W},\left\vert l\right\vert =1,\dots ,N-\left\vert
w\right\vert }F^{lw}\left( y_{s}\right) \left( T_{s,t}^{X},l\right)
\right\Vert \\
&\leq &C\left( p,d,\omega \left( 0,T\right) \right) N!\frac{\tilde{\omega}%
\left( s,t\right) ^{\frac{N+1-\left\vert w\right\vert }{p}}}{\beta
_{p}\left( \frac{N+1-\left\vert w\right\vert }{p}\right) !}.
\end{eqnarray*}%
For $w\in \mathcal{W}$, $\left\vert w\right\vert =0,\dots ,N-\left[ p\right]
-1$,%
\begin{eqnarray*}
&&\left\Vert F^{w}\left( y_{t}^{s,t}\right) -F^{w}\left( y_{s}\right)
-\sum_{l\in \mathcal{W},\left\vert l\right\vert =1,\dots ,\left[ p\right]
}F^{lw}\left( y_{s}\right) \left( T_{s,t}^{X},l\right) \right\Vert \\
&\leq &C\left( p,d,\omega \left( 0,T\right) \right) \left( \left\vert
w\right\vert +\left[ p\right] \right) !\tilde{\omega}\left( s,t\right) ^{%
\frac{\left[ p\right] +1}{p}}.
\end{eqnarray*}
\end{lemma}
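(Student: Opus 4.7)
The plan is to apply a Taylor expansion identity---Lemma~\ref{Lemma expression of Taylor expansion} for the first estimate and an analogous iteration of the fundamental theorem of calculus truncated at degree $[p]$ for the second---and then to bound the resulting iterated-integral remainders using the structural results Lemmas~\ref{Lemma for bound of composition of vector fields}, \ref{Lemma uniform bound on yst}, \ref{Lemma bound 1-var of geodesic by omega}, and \ref{Lemma bound on Tn}, together with the neo-classical inequality applied to the geodesic path $x$ viewed as a bounded variation path in $\mathbb{R}^{K}$.

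For the first estimate, with $|w|\in\{N-[p],\dots,N-1\}$, the condition $N-|w|\leq[p]$ ensures that for every word $l=t_{1}\cdots t_{k}$ with $|l|\leq[p]$, the iterated integral along $x$ equals $(\bar{X}_{s,t},t_{1}\ast\cdots\ast t_{k})=(T^{X}_{s,t},l)$ by Notation~\ref{Notation TX}. Hence the sum being subtracted in the statement coincides with the corresponding sum in Lemma~\ref{Lemma expression of Taylor expansion}, and the left-hand side equals the sum of the diagonal term $(A)$ over tuples with $\sum|t_{i}|=N-|w|$ and the cross term $(B)$ over tuples with $|t_{2}|+\cdots+|t_{k}|<N-|w|<|t_{1}|+\cdots+|t_{k}|$. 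For $(A)$, Lemma~\ref{Lemma for bound of composition of vector fields} (first part) gives the Lipschitz constant $N!$ for $F^{t_{1}\cdots t_{k}w}$ since $|t_{1}\cdots t_{k}w|=N$, and Lemma~\ref{Lemma uniform bound on yst} bounds $\|y_{u}^{s,t}-y_{s}\|$ by $C\omega(s,t)^{1/p}$, producing an extra factor $\omega(s,t)^{1/p}$. For $(B)$, Lemma~\ref{Lemma for bound of composition of vector fields} (second part) yields a uniform sup-norm bound since the inner word $t_{2}\cdots t_{k}w$ has degree strictly less than $N$. In both cases one is reduced to bounding $\sum|\int\cdots dx^{t_{1}}\cdots dx^{t_{k}}|$ over tuples of fixed total degree $m$, which is controlled via the neo-classical inequality and the $1$-variation estimates from Lemma~\ref{Lemma bound 1-var of geodesic by omega}; the geometric-in-$m$ counting $T_{m}\leq(K_{p}d)^{m}$ from Lemma~\ref{Lemma bound on Tn} is absorbed into $\tilde{\omega}=(K_{p}d)^{p}\omega$.

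For the second estimate, with $|w|\in\{0,\dots,N-[p]-1\}$, the sum in the statement only extends over $|l|\leq[p]$, so Lemma~\ref{Lemma expression of Taylor expansion} does not directly apply. Instead, iterate the fundamental theorem of calculus on $F^{w}(y_{u}^{s,t})$, stopping along each branch once the cumulative degree of added letters first exceeds $[p]$. The resulting Taylor terms have total degree in $\{1,\dots,[p]\}$ and match the sum in the statement via Notation~\ref{Notation TX}, while the remainder collects tuples $(t_{1},\dots,t_{k})$ with $|t_{2}|+\cdots+|t_{k}|\leq[p]$ and $|t_{1}|+\cdots+|t_{k}|\in\{[p]+1,\dots,2[p]\}$. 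Each such integrand is uniformly bounded by $(|w|+[p])!$ via Lemma~\ref{Lemma for bound of composition of vector fields} (second part, applied with the inner word of degree at most $|w|+[p]<N$), and the sum of the iterated integrals is controlled exactly as in the first estimate. The dominant contribution of lowest total degree $[p]+1$ produces $\tilde{\omega}(s,t)^{([p]+1)/p}$, while contributions of degree $m\in\{[p]+2,\dots,2[p]\}$ carry additional factors $\omega(s,t)^{(m-[p]-1)/p}\leq\omega(0,T)^{(m-[p]-1)/p}$ that are absorbed into $C(p,d,\omega(0,T))$.

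The main obstacle is extracting the factorial $(m/p)!$ from $\sum_{(t_{1},\dots,t_{k}):\sum|t_{i}|=m}|\int\cdots dx^{t_{1}}\cdots dx^{t_{k}}|$. Because the letters $\upsilon_{1},\dots,\upsilon_{K}$ carry heterogeneous degrees up to $[p]$, the classical neo-classical inequality in a tensor algebra does not apply in a homogeneous way. This is overcome by invoking the freeness of the Grossman--Larson algebra on $\mathcal{B}_{\mathcal{L}}$: $\bar{X}$ is then a genuine geometric rough path on the free algebra, and the neo-classical inequality combined with Lyons' extension theorem in that setting yields the $1/(\beta_{p}(m/p)!)$ factor, with the $(K_{p}d)^{m}$ count of tuples absorbed into $\tilde{\omega}$ as above.
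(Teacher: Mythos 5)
Your decomposition is the same as the paper's: apply Lemma \ref{Lemma expression of Taylor expansion} (and its degree-$\left[ p\right]$ truncated analogue for the second estimate), identify the subtracted sum with $\left( T_{s,t}^{X},l\right)$ via the defining property of the geodesic, bound the diagonal term by the Lipschitz estimate of Lemma \ref{Lemma for bound of composition of vector fields} together with Lemma \ref{Lemma uniform bound on yst} (which supplies the extra factor $\omega \left( s,t\right) ^{1/p}$), bound the overshoot term by the sup-norm estimate, and count tuples with Lemma \ref{Lemma bound on Tn} so that the factor $\left( K_{p}d\right) ^{m}$ is absorbed into $\tilde{\omega}$. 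Up to this point you reproduce the paper's argument.

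Where you diverge, and where your write-up has a genuine soft spot, is the ``main obstacle'' you identify: extracting the factor $1/\left( \beta _{p}\left( \frac{N+1-\left\vert w\right\vert }{p}\right) !\right) $. The paper does not extract this factor at all. Since $\left\vert w\right\vert \geq N-\left[ p\right] $ in the first estimate, one has $\frac{N+1-\left\vert w\right\vert }{p}\leq \frac{\left[ p\right] +1}{p}\leq 2$, so $\left( \frac{N+1-\left\vert w\right\vert }{p}\right) !\leq 2$ and the whole denominator is a constant depending only on $p$, absorbed into $C\left( p,d,\omega \left( 0,T\right) \right) $; the crude bound (product of $1$-variations from Lemma \ref{Lemma bound 1-var of geodesic by omega} times the tuple count) then suffices. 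Your proposed alternative --- invoking the neo-classical inequality and ``Lyons' extension theorem in the free algebra'' to obtain genuine $\left( m/p\right) !$ decay for $\sum_{\left\vert t_{1}\right\vert +\cdots +\left\vert t_{k}\right\vert =m}\left\vert \idotsint dx^{t_{1}}\cdots dx^{t_{k}}\right\vert $ --- is not supported by any lemma in the paper and is not obviously correct as stated: the neo-classical inequality is a statement about splitting $\left( a+b\right) ^{n/p}/\left( n/p\right) !$, not a bound on iterated integrals of a fixed bounded-variation path grouped by weighted degree, and the higher iterated integrals of the geodesic associated with $\bar{X}_{s,t}$ are not a priori the Lyons extension of the path $u\mapsto \bar{X}_{s,u}$, so the factorial-decay estimate from the extension theorem does not apply to them directly. (The neo-classical inequality genuinely enters only in the induction step of the proof of Theorem \ref{main theorem}, via Lemma \ref{Lemma factorial decay of TX}, not here.) This misstep is removable: delete that argument, note the factorial is bounded by $2$, and your proof closes along the paper's lines.
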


\begin{proof}
We prove the first estimate. The proof for the second estimate is similar.
Recall that $T_{n}$ denotes the number of elements in $\mathcal{W}$ of
degree $n$. Based on Lemma \ref{Lemma expression of Taylor expansion}, Lemma %
\ref{Lemma for bound of composition of vector fields}, Lemma \ref{Lemma
uniform bound on yst}, Lemma \ref{Lemma bound 1-var of geodesic by omega}
and that $T_{n}\leq \left( K_{p}d\right) ^{n}$ in Lemma \ref{Lemma bound on
Tn}, we have%
\begin{eqnarray*}
&&\left\Vert F^{w}\left( y_{t}^{s,t}\right) -F^{w}\left( y_{s}\right)
-\sum_{l\in \mathcal{W},\left\vert l\right\vert =1,\dots ,N-\left\vert
w\right\vert }F^{lw}\left( y_{s}\right) \left( T_{s,t}^{X},l\right)
\right\Vert \\
&\leq &C\left( p,d,\omega \left( 0,T\right) \right) N!\left( T_{N-\left\vert
w\right\vert }\omega \left( s,t\right) ^{\frac{N+1-\left\vert w\right\vert }{%
p}}+\sum_{j=1}^{\left[ p\right] -1}T_{N-\left\vert w\right\vert +j}\omega
\left( s,t\right) ^{\frac{N-\left\vert w\right\vert +j}{p}}\right) \\
&\leq &C\left( p,d,\omega \left( 0,T\right) \right) N!\left( \left(
K_{p}d\right) ^{p}\omega \left( s,t\right) \right) ^{\frac{N+1-\left\vert
w\right\vert }{p}} \\
&\leq &C\left( p,d,\omega \left( 0,T\right) \right) N!\frac{\tilde{\omega}%
\left( s,t\right) ^{\frac{N+1-\left\vert w\right\vert }{p}}}{\beta
_{p}\left( \frac{N+1-\left\vert w\right\vert }{p}\right) !}
\end{eqnarray*}%
as $\frac{N+1-\left\vert w\right\vert }{p}\leq \frac{\left[ p\right] +1}{p}%
\leq 2$, where $\tilde{\omega}:=\left( K_{p}d\right) ^{p}\omega $.
\end{proof}

\begin{proposition}
\label{Proposition taylor expansion y}For integer $N\geq \left[ p\right] $, 
\begin{equation*}
\sum_{l\in \mathcal{W},\left\vert l\right\vert =1}^{N}F^{l}\left(
T_{s,t}^{X},l\right) =\sum_{\tau \in T_{\mathcal{L}}^{N}}f\left( \tau
\right) \frac{\left( X_{s,t},\tau \right) }{\sigma \left( \tau \right) }.
\end{equation*}
\end{proposition}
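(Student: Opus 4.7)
The strategy is to reformulate both sides as values of $\psi_f(\cdot)(I)$ applied to elements of $\mathcal{H}_\mathcal{L}^N$, reducing the identity to an algebraic statement about those elements. By Lemma \ref{Lemma expression of Ft1...tk}, $F^l = \psi_f(l^*)(I)$, and since $\psi_f$ is $\mathbb{R}$-linear on $\mathbb{R}F_\mathcal{L}$,
\[
\text{LHS}=\psi_f\!\Big(\sum_{l\in\mathcal{W},\,1\le|l|\le N}(T_{s,t}^X,l)\,l^*\Big)(I).
\]
On the other hand, since $I:\mathbb{R}^e\to\mathbb{R}^e$ is the identity map we have $d^kI=0$ for $k\ge 2$, so $\psi_f(\rho)(I)$ equals $f(\tau)$ when $\rho=\tau$ is a single tree and vanishes on all forests with at least two trees. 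Consequently
\[
\text{RHS}=\sum_{\tau\in T_\mathcal{L}^N}f(\tau)(\bar X_{s,t},\tau)=\sum_{\rho\in F_\mathcal{L}^N}(\bar X_{s,t},\rho)\psi_f(\rho)(I)=\psi_f(\bar X_{s,t})(I).
\]
Thus the proposition is equivalent to the key identity
\[
(\bar X_{s,t},\tau)=\sum_{l\in\mathcal{W},\,|l|=|\tau|}(T_{s,t}^X,l)\,(l^*,\tau)\qquad\text{for every }\tau\in T_\mathcal{L}^N,\tag{$\star$}
\]
because the forest coefficients on non-trees contribute zero after applying $\psi_f(\cdot)(I)$.

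I would prove $(\star)$ by induction on $|\tau|$. In the base range $|\tau|\le[p]$, the definition $(T^X,l)=(\bar X,l^*)$ in Notation~\ref{Notation TX} and the freeness of the Grossman--Larson algebra on $\mathcal{B}_\mathcal{L}$ make $\{l^*:|l|=n\}$ a basis of $\mathcal{H}_{\mathcal{L},n}$ for $n\le[p]$. Substituting $(T^X,l)=\sum_\rho(l^*,\rho)(\bar X,\rho)$ into the right-hand side of $(\star)$, interchanging sums, and using the grouplike identity $\delta\bar X=\bar X\otimes\bar X$ together with the duality $(\ref{inner GL product and CK coproduct})$ between the GL $*$-product and the CK coproduct, the non-tree contributions cancel and the sum collapses to $(\bar X,\tau)$. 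For $|\tau|>[p]$, I would propagate $(\star)$ from lower to higher degrees using the uniqueness of two compatible extensions: the extension of $\bar X$ to $\mathcal{G}_\mathcal{L}^N$ supplied by Proposition~\ref{Proposition transfer between rough paths in two groups}, and the Lyons extension of $T^X$ as a geometric $p$-rough path in the tensor algebra $T(\mathcal{B}_\mathcal{L}^{[p]})$. The embedding $\Pi:T(\mathcal{B}_\mathcal{L}^{[p]})\hookrightarrow\mathcal{H}_\mathcal{L}$, $l\mapsto l^*$, is a Hopf-algebra morphism (trees are primitive for $\delta$, so $\delta$ restricts to the shuffle coproduct on $T(\mathcal{B}_\mathcal{L}^{[p]})$), hence it intertwines the two extensions.

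The main obstacle is the base case: the elements $\{l^*\}$ are not orthonormal with respect to the forest pairing, so $(\star)$ is not a tautological dual-basis expansion. The cancellations needed to reduce $\sum_l(\bar X,l^*)(l^*,\tau)$ to $(\bar X,\tau)$ occur only because $\bar X$ is grouplike, which forces the specific combinations of forest coefficients dictated by $(\ref{inner GL product and CK coproduct})$ to line up correctly; this is the combinatorial heart of the argument and the place where the GL-CK duality really does the work.
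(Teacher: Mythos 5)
Your proposal follows essentially the same route as the paper: both sides are rewritten as $\psi_f(\cdot)(I)$ applied to expansions of $\bar{X}_{s,t}$ in the word basis $\{l^*\}$ and in the forest basis respectively, with the identification $(T^X_{s,t},l)=(\bar{X}_{s,t},l^*)$ propagated from degree $[p]$ up to degree $N$ by the compatibility and uniqueness of the extensions. The paper compresses the combinatorial step you single out as the ``main obstacle'' --- that freeness of the Grossman--Larson algebra gives $\bar{X}_{s,t}=\sum_l(\bar{X}_{s,t},l^*)\,l^*=\sum_\rho(\bar{X}_{s,t},\rho)\,\rho$ --- into a single displayed chain of equalities, but the underlying argument is the one you describe.
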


\begin{proof}
According to\ $T_{s,t}^{X}$ in Notation \ref{Notation TX},%
\begin{equation}
\left( T_{s,t}^{X},t_{1}\cdots t_{k}\right) =\left( \bar{X}_{s,t},t_{1}\ast
\cdots \ast t_{k}\right)  \label{inner relation betwen TX and Xbar}
\end{equation}%
for $t_{i}\in \mathcal{B}_{\mathcal{L}}^{\left[ p\right] },\left\vert
t_{1}\right\vert +\cdots +\left\vert t_{k}\right\vert \leq \left[ p\right] $%
. Then based on the construction of the extension of\ $T^{X}$\ and $\bar{X}$ 
\cite[Theorem 2.2.1]{lyons1998differential}, it can be proved inductively
that $\left( \ref{inner relation betwen TX and Xbar}\right) $ holds for $%
t_{i}\in \mathcal{B}_{\mathcal{L}}^{\left[ p\right] },\left\vert
t_{1}\right\vert +\cdots +\left\vert t_{n}\right\vert \leq N,N\geq \left[ p%
\right] $. Moreover, if there exists $i\in \left\{ 1,\dots ,k\right\} $ such
that $t_{i}\in \mathcal{B}_{\mathcal{L}}\backslash \mathcal{B}_{\mathcal{L}%
}^{\left[ p\right] }$, then $\left( \bar{X}_{s,t},t_{1}\ast \cdots \ast
t_{k}\right) =0$.%
\begin{eqnarray*}
\bar{X}_{s,t} &=&\sum_{t_{i}\in \mathcal{B}_{\mathcal{L}}^{\left[ p\right]
},\left\vert t_{1}\right\vert +\cdots +\left\vert t_{k}\right\vert \leq
N}\left( \bar{X}_{s,t},t_{1}\ast \cdots \ast t_{k}\right) t_{1}\ast \cdots
\ast t_{k} \\
&=&\sum_{\rho \in F_{\mathcal{L}}^{N},\left\vert \rho \right\vert \leq
N}\left( \bar{X}_{s,t},\rho \right) \rho \\
&=&\sum_{\rho \in F_{\mathcal{L}}^{N},\left\vert \rho \right\vert \leq N}%
\frac{\left( X_{s,t},\rho \right) }{\sigma \left( \rho \right) }\rho ,
\end{eqnarray*}%
where the last step is based on Proposition \ref{Proposition transfer
between rough paths in two groups}. Combined with Lemma \ref{Lemma
expression of Ft1...tk}, 
\begin{equation*}
\sum_{l\in \mathcal{W},\left\vert l\right\vert =1}^{N}F^{l}\left(
T_{s,t}^{X},l\right) =\psi _{f}\left( \bar{X}_{s,t}\right) I=\sum_{\tau \in
T_{\mathcal{L}}^{N}}f\left( \tau \right) \frac{\left( X_{s,t},\tau \right) }{%
\sigma \left( \tau \right) }.
\end{equation*}
\end{proof}

\begin{proposition}
\label{Proposition difference between y and y st}%
\begin{equation*}
\left\Vert y_{t}-y_{t}^{s,t}\right\Vert \leq C\left( p,d,\omega \left(
0,T\right) \right) \omega \left( s,t\right) ^{\frac{\left[ p\right] +1}{p}}
\end{equation*}
\end{proposition}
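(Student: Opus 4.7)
The plan is to compare two Taylor expansions around $y_s$, both truncated at level $[p]$: the expansion of the true RDE solution $y_t$ and the expansion of the auxiliary ODE solution $y_t^{s,t}$. Since $y^{s,t}$ is driven over $[s,t]$ by a geodesic whose signature reproduces $\bar{X}_{s,t}$ exactly up to level $[p]$, the polynomial parts of these two expansions will coincide, so the difference $y_t - y_t^{s,t}$ is controlled by the sum of the two remainders, both of which are of order $\omega(s,t)^{([p]+1)/p}$.

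The first step is to invoke the already-known branched RDE Taylor estimate at the minimal level, namely \cite[Theorem 8.8]{gubinelli2010ramification}, which gives
\begin{equation*}
\left\Vert y_t - y_s - \sum_{\tau \in T_{\mathcal{L}}^{[p]}} f(\tau)(y_s) \frac{(X_{s,t},\tau)}{\sigma(\tau)} \right\Vert \leq C(p,d,\omega(0,T))\, \omega(s,t)^{\frac{[p]+1}{p}}.
\end{equation*}
The second step is to apply Lemma \ref{Lemma initial estimate induction} with the empty word $w=\eta$, so that $F^\eta = I$. Either the first clause (when $N=[p]$, so $|w|=0 = N-[p]$) or the second clause (when $N\geq [p]+1$, so $|w|=0 \leq N-[p]-1$) applies; in both cases the relevant truncation is at level $[p]$, giving
\begin{equation*}
\left\Vert y_t^{s,t} - y_s - \sum_{l \in \mathcal{W},\,|l|=1,\dots,[p]} F^l(y_s)\,(T_{s,t}^X, l) \right\Vert \leq C(p,d,\omega(0,T))\, \omega(s,t)^{\frac{[p]+1}{p}}.
\end{equation*}

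The third step is to identify the two polynomial parts. Proposition \ref{Proposition taylor expansion y} applied at level $N=[p]$ states exactly
\begin{equation*}
\sum_{l \in \mathcal{W},\,|l|=1,\dots,[p]} F^l(y_s)\,(T_{s,t}^X, l) = \sum_{\tau \in T_{\mathcal{L}}^{[p]}} f(\tau)(y_s) \frac{(X_{s,t},\tau)}{\sigma(\tau)},
\end{equation*}
so subtracting the two displayed Taylor expansions and applying the triangle inequality produces the claimed bound on $\Vert y_t - y_t^{s,t}\Vert$.

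I do not anticipate a substantial obstacle: Step 1 is a direct citation of Gubinelli's level-$[p]$ remainder estimate; Step 2 is a special case of a lemma already available in this section; and Step 3 is the algebraic matching provided by Proposition \ref{Proposition taylor expansion y}, which rests on the identification $(\bar{X}_{s,t},\rho)=(X_{s,t},\rho)/\sigma(\rho)$ together with the freeness of the Grossman--Larson algebra. The only care needed is to verify that both clauses of Lemma \ref{Lemma initial estimate induction} cover the case $w=\eta$ for every admissible value of $N$, which is a simple range check.
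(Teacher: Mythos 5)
Your proposal is correct and follows essentially the same route as the paper: both bound $y_t-y_t^{s,t}$ by the triangle inequality between the level-$[p]$ Taylor remainder of $y^{s,t}$ (obtained from Lemma \ref{Lemma initial estimate induction} with $w=\eta$ together with Proposition \ref{Proposition taylor expansion y}) and the level-$[p]$ Taylor remainder of $y$. The only difference is cosmetic: you cite Gubinelli's Theorem 8.8 for the latter, whereas the paper derives it from the uniform bound on the Picard series and its convergence in \cite{lyons2015theory}; your explicit range check on which clause of Lemma \ref{Lemma initial estimate induction} covers $w=\eta$ when $N=[p]$ versus $N\geq[p]+1$ is a welcome precision the paper glosses over.
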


\begin{proof}
Let $F^{w}=I$ with $\left\vert w\right\vert =0$ in the second estimate of
Lemma \ref{Lemma initial estimate induction}, and combine with Proposition %
\ref{Proposition taylor expansion y}, 
\begin{equation*}
\left\Vert y_{t}^{s,t}-y_{s}-\sum_{\tau \in T_{\mathcal{L}}^{\left[ p\right]
}}f\left( \tau \right) \left( y_{s}\right) \frac{\left( X_{s,t},\tau \right) 
}{\sigma \left( \tau \right) }\right\Vert \leq C\left( p,d,\omega \left(
0,T\right) \right) \omega \left( s,t\right) ^{\frac{\left[ p\right] +1}{p}}.
\end{equation*}%
Based on \cite[Lemma 17]{lyons2015theory},%
\begin{equation}
\left\Vert y_{t}-y_{s}-\sum_{\tau \in T_{\mathcal{L}}^{\left[ p\right]
}}f\left( \tau \right) \left( y_{s}\right) \frac{\left( X_{s,t},\tau \right) 
}{\sigma \left( \tau \right) }\right\Vert \leq C\left( p,d,\omega \left(
0,T\right) \right) \omega \left( s,t\right) ^{\frac{\left[ p\right] +1}{p}}.
\label{inner Taylor estimate of y}
\end{equation}%
The estimate $\left( \ref{inner Taylor estimate of y}\right) $ can be proved
based on the uniform bound on Picard series \cite[Definition 9, Lemma 17]%
{lyons2015theory} and that Picard series converges to the unique solution 
\cite[Theorem 22]{lyons2015theory}.
\end{proof}

\begin{lemma}
Set $\tilde{\omega}:=\left( K_{p}d\right) ^{p}\omega $. For $w\in \mathcal{W}
$, $\left\vert w\right\vert =N-\left[ p\right] ,\dots ,N$,%
\begin{eqnarray}
&&\left\Vert F^{w}\left( y_{t}\right) -F^{w}\left( y_{s}\right) -\sum_{l\in 
\mathcal{W},\left\vert l\right\vert =1,\dots ,N-\left\vert w\right\vert
}F^{lw}\left( y_{s}\right) \left( T_{s,t}^{X},l\right) \right\Vert
\label{first statement in Lemma needed} \\
&\leq &C\left( p,d,\omega \left( 0,T\right) \right) N!\frac{\left( \tilde{%
\omega}\left( s,t\right) \right) ^{\frac{N+1-\left\vert w\right\vert }{p}}}{%
\beta _{p}\left( \frac{N+1-\left\vert w\right\vert }{p}\right) !}.  \notag
\end{eqnarray}%
For $w\in \mathcal{W}$, $\left\vert w\right\vert =0,\dots ,N-\left[ p\right]
-1$,%
\begin{eqnarray}
&&\left\Vert F^{w}\left( y_{t}\right) -F^{w}\left( y_{s}\right) -\sum_{l\in 
\mathcal{W},\left\vert l\right\vert =1,\dots ,\left[ p\right] }F^{lw}\left(
y_{s}\right) \left( T_{s,t}^{X},l\right) \right\Vert
\label{second statement in Lemma needed} \\
&\leq &C\left( p,d,\omega \left( 0,T\right) \right) \left( \left\vert
w\right\vert +\left[ p\right] \right) !\tilde{\omega}\left( s,t\right) ^{%
\frac{\left[ p\right] +1}{p}}.  \notag
\end{eqnarray}
\end{lemma}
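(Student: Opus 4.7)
The plan is to reduce both estimates $\left( \ref{first statement in Lemma needed}\right) $ and $\left( \ref{second statement in Lemma needed}\right) $ to their already-established counterparts for the smoothed trajectory $y^{s,t}$ in Lemma \ref{Lemma initial estimate induction}, paying only a Lipschitz correction controlled by the approximation bound $\left\Vert y_{t}-y_{t}^{s,t}\right\Vert \leq C\left( p,d,\omega \left( 0,T\right) \right) \omega \left( s,t\right) ^{\left( \left[ p\right] +1\right) /p}$ of Proposition \ref{Proposition difference between y and y st}. The common decomposition I would use is
\[
F^{w}\left( y_{t}\right) -F^{w}\left( y_{s}\right) =\bigl[F^{w}\left( y_{t}^{s,t}\right) -F^{w}\left( y_{s}\right) \bigr]+\bigl[F^{w}\left( y_{t}\right) -F^{w}\left( y_{t}^{s,t}\right) \bigr],
\]
so that, subtracting the linearisation $\sum _{l}F^{lw}\left( y_{s}\right) \left( T_{s,t}^{X},l\right) $ from both sides, the first bracket with its linearisation removed is controlled by Lemma \ref{Lemma initial estimate induction}, and the second bracket is controlled using $\left\Vert F^{w}\left( y_{t}\right) -F^{w}\left( y_{t}^{s,t}\right) \right\Vert \leq \left\vert w\right\vert !\left\Vert y_{t}-y_{t}^{s,t}\right\Vert $ from Lemma \ref{Lemma for bound of composition of vector fields}.

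For the range $\left\vert w\right\vert =N-\left[ p\right] ,\dots ,N-1$ of $\left( \ref{first statement in Lemma needed}\right) $, the first estimate of Lemma \ref{Lemma initial estimate induction} already carries the sharp denominator $\beta _{p}\left( \left( N+1-\left\vert w\right\vert \right) /p\right) !$, which is an absolute constant since $\left( N+1-\left\vert w\right\vert \right) /p\leq \left( \left[ p\right] +1\right) /p\leq 2$ in this range. The Lipschitz correction of size $\left\vert w\right\vert !\,\omega \left( s,t\right) ^{\left( \left[ p\right] +1\right) /p}$ is absorbed after writing $\omega \left( s,t\right) ^{\left( \left[ p\right] +1\right) /p}=\omega \left( s,t\right) ^{\left( N+1-\left\vert w\right\vert \right) /p}\omega \left( s,t\right) ^{\left( \left[ p\right] +\left\vert w\right\vert -N\right) /p}$ and bounding the second factor by $\omega \left( 0,T\right) ^{\left[ p\right] /p}$, which is swallowed by the constant $C\left( p,d,\omega \left( 0,T\right) \right) $. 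The boundary case $\left\vert w\right\vert =N$, not covered by Lemma \ref{Lemma initial estimate induction}, I would handle separately: the Taylor sum is empty, and Lemma \ref{Lemma for bound of composition of vector fields} combined with the triangle inequality $\left\Vert y_{t}-y_{s}\right\Vert \leq \left\Vert y_{t}-y_{t}^{s,t}\right\Vert +\left\Vert y_{t}^{s,t}-y_{s}\right\Vert \leq C\left( p,d,\omega \left( 0,T\right) \right) \omega \left( s,t\right) ^{1/p}$ (via Proposition \ref{Proposition difference between y and y st} and Lemma \ref{Lemma uniform bound on yst}) gives the bound at the required rate $\omega ^{1/p}$.

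For the range $\left\vert w\right\vert =0,\dots ,N-\left[ p\right] -1$ of $\left( \ref{second statement in Lemma needed}\right) $ the same decomposition applies: the second estimate of Lemma \ref{Lemma initial estimate induction} controls the first bracket at the target exponent $\left( \left[ p\right] +1\right) /p$, and the Lipschitz correction $\left\vert w\right\vert !\,C\omega \left( s,t\right) ^{\left( \left[ p\right] +1\right) /p}$ fits because $\left\vert w\right\vert !\leq \left( \left\vert w\right\vert +\left[ p\right] \right) !$ and a factor $\left( K_{p}d\right) ^{-\left( \left[ p\right] +1\right) }\leq 1$ converts $\omega $ to $\tilde{\omega}$. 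The only real concern throughout is that the Lipschitz correction does not inherit the sharp factorial denominator produced by the neo-classical inequality in Lemma \ref{Lemma initial estimate induction}; this is harmless in the first range because $\left( \left( N+1-\left\vert w\right\vert \right) /p\right) !$ is $O\left( 1\right) $ there, and harmless in the second range because the target bound carries no such denominator to begin with. All the substantive combinatorial and analytic work has already been done upstream, in Lemma \ref{Lemma initial estimate induction} and in the Picard-series analysis behind Proposition \ref{Proposition difference between y and y st}; the present lemma is essentially a packaging step that transfers the Taylor expansion from the smooth driver $x$ to the branched rough path $X$.
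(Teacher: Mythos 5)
Your proposal is correct and follows essentially the same route as the paper: decompose via $F^{w}(y_{t})-F^{w}(y_{s})=[F^{w}(y_{t}^{s,t})-F^{w}(y_{s})]+[F^{w}(y_{t})-F^{w}(y_{t}^{s,t})]$, bound the second bracket by $\left\vert w\right\vert !\left\Vert y_{t}-y_{t}^{s,t}\right\Vert$ via Lemma \ref{Lemma for bound of composition of vector fields} and Proposition \ref{Proposition difference between y and y st}, invoke Lemma \ref{Lemma initial estimate induction} for $\left\vert w\right\vert \leq N-1$, and treat $\left\vert w\right\vert =N$ separately by the triangle inequality through $y_{t}^{s,t}$. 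Your additional bookkeeping on absorbing the exponent gap and the bounded factorial denominator is exactly the (implicit) content of the paper's one-line conclusion.
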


\begin{proof}
Combine Lemma \ref{Lemma for bound of composition of vector fields} with
Proposition\ \ref{Proposition difference between y and y st},%
\begin{equation*}
\left\Vert F^{w}\left( y_{t}^{s,t}\right) -F^{w}\left( y_{t}\right)
\right\Vert \leq \left\vert w\right\vert !\left\Vert
y_{t}^{s,t}-y_{t}\right\Vert \leq C\left( p,d,\omega \left( 0,T\right)
\right) \left\vert w\right\vert !\tilde{\omega}\left( s,t\right) ^{\frac{%
\left[ p\right] +1}{p}}.
\end{equation*}%
When $\left\vert w\right\vert \leq N-1$, the results follow from Lemma \ref%
{Lemma initial estimate induction}. When $\left\vert w\right\vert =N$, based
on Lemma \ref{Lemma for bound of composition of vector fields}, 
\begin{eqnarray*}
\left\Vert F^{w}\left( y_{t}\right) -F^{w}\left( y_{s}\right) \right\Vert
&\leq &N!\left\Vert y_{t}-y_{s}\right\Vert \leq N!\left( \left\Vert
y_{t}-y_{t}^{s,t}\right\Vert +\left\Vert y_{t}^{s,t}-y_{s}\right\Vert \right)
\\
&\leq &C\left( p,d,\omega \left( 0,T\right) \right) N!\frac{\tilde{\omega}%
\left( s,t\right) ^{\frac{1}{p}}}{\beta _{p}\left( \frac{1}{p}\right) !}
\end{eqnarray*}%
where the last step follows from Proposition \ref{Proposition difference
between y and y st} and Lemma \ref{Lemma uniform bound on yst}.
\end{proof}

\begin{lemma}
\label{Lemma factorial decay of TX}For $l\in \mathcal{W},\left\vert
l\right\vert =1,2,\dots $%
\begin{equation*}
\left\Vert \left( T_{s,t}^{X},l\right) \right\Vert \leq \frac{\tilde{\omega}%
\left( s,t\right) ^{\frac{\left\vert l\right\vert }{p}}}{\beta _{p}\left( 
\frac{l}{p}\right) !}
\end{equation*}%
where $\tilde{\omega}=c_{p,d}\omega $ for some constant $c_{p,d}$ depending
on $p,d$.
\end{lemma}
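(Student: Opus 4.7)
The plan is to realise $T^X$ as a geometric $p$-rough path in the graded tensor algebra $T((V))$, where $V = \mathrm{span}(\mathcal{B}_{\mathcal{L}}^{[p]})$ carries the tree-degree grading $\upsilon_i \mapsto |\upsilon_i|$, and then to invoke Lyons' extension theorem. The freeness of the Grossman--Larson algebra on $\mathcal{B}_{\mathcal{L}}$ \cite{foissy2002finite, chapoton2010free} gives an algebra isomorphism $T(V) \to \mathcal{H}_{\mathcal{L}}$ sending $\upsilon_{i_1} \otimes \cdots \otimes \upsilon_{i_k}$ to $\upsilon_{i_1} \ast \cdots \ast \upsilon_{i_k}$, under which $T^X_{s,t}$ is identified with (the truncation of) $\bar{X}_{s,t}$. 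Chen's identity $(T^X_{s,t}, l) = \sum_{l = l_1 \cdot l_2} (T^X_{s,u}, l_1)(T^X_{u,t}, l_2)$ then follows from the group relation $\bar{X}_{s,t} = \bar{X}_{s,u} \ast \bar{X}_{u,t}$ in $\mathcal{G}_{\mathcal{L}}^{[p]}$, together with the observation that in the word basis the $\ast$-product is concatenation, so $(\rho_1 \ast \rho_2, t_1 \ast \cdots \ast t_k)$ is nonzero only for prefix--suffix pairs $\rho_1, \rho_2$.

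For the base case $|l| \le [p]$, Definition \ref{Definition geodesic norm} of the geodesic $x : [s,t] \to \mathbb{R}^K$ associated with $\bar{X}_{s,t}$ yields
\[
(T^X_{s,t}, l) = (\bar{X}_{s,t}, t_1 \ast \cdots \ast t_k) = \idotsint_{s < u_1 < \cdots < u_k < t} dx^{t_1}_{u_1} \cdots dx^{t_k}_{u_k}.
\]
The trivial bound $\bigl|\idotsint dx^{t_1} \cdots dx^{t_k}\bigr| \le \prod_j \|x^{t_j}\|_{1-var}/k!$, combined with Lemma \ref{Lemma bound 1-var of geodesic by omega} controlling each factor by $M_{p,d}^{|t_j|}\omega(s,t)^{|t_j|/p}$, gives $|(T^X_{s,t}, l)| \le M_{p,d}^{|l|}\omega(s,t)^{|l|/p}/k!$. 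Summing over the $T_n \le (K_p d)^n$ words of degree $n$ (Lemma \ref{Lemma bound on Tn}) and absorbing the resulting constants into a new control $\tilde\omega = c_{p,d}\omega$ establishes, for each $n \le [p]$, the level-$n$ hypothesis $\sum_{|l|=n}|(T^X_{s,t}, l)| \le \tilde\omega(s,t)^{n/p}/(\beta_p(n/p)!)$ required as input for Lyons' extension.

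Finally, for $|l| > [p]$, I invoke Lyons' extension theorem \cite[Theorem 2.2.1]{lyons1998differential} applied to $T^X$ as a $p$-rough path in $T((V))$; the standard proof via the neo-classical inequality is a purely numerical induction and carries over to the graded setting once one reinterprets ``level $n$'' as degree rather than tensor length. The main obstacle I anticipate is the delicate balance between the factorial decay $1/(n/p)!$ produced by the neo-classical step and the geometric growth $T_n \le (K_p d)^n$ of the number of degree-$n$ words appearing in each summation; the key point, made possible precisely by Lemma \ref{Lemma bound on Tn}, is that the geometric factor is dominated by $(n/p)!$, so enlarging the control to $\tilde\omega = (K_p d)^p\omega$ (up to further constants from the base case) absorbs the combinatorial overhead at every inductive step and yields the stated estimate, consistent with the $\tilde\omega$ already used in Lemma \ref{Lemma initial estimate induction}.
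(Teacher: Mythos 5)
Your proposal is correct and follows essentially the same route as the paper: a level-$\leq\left[ p\right] $ bound on the word coefficients of $T^{X}$ (the paper obtains it directly from the equivalence of continuous homogeneous norms on $\mathcal{G}_{\mathcal{L}}^{\left[ p\right] }$ in Proposition \ref{Proposition equivalency of norms}, while you obtain it via the geodesic, which rests on the same proposition through Lemma \ref{Lemma bound 1-var of geodesic by omega}), followed by Lyons' extension theorem applied to $T^{X}$ in the graded tensor algebra over the span of $\mathcal{B}_{\mathcal{L}}^{\left[ p\right] }$, with the word count $T_{n}\leq \left( K_{p}d\right) ^{n}$ absorbed into $\tilde{\omega}$. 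One small slip: the claimed bound $\bigl\vert \idotsint dx^{t_{1}}\cdots dx^{t_{k}}\bigr\vert \leq \prod_{j}\Vert x^{t_{j}}\Vert _{1-var}/k!$ is false for distinct coordinates (only the bound without the $1/k!$ holds in general), but this is harmless here since for $\left\vert l\right\vert \leq \left[ p\right] $ the factor $\beta _{p}\left( \left\vert l\right\vert /p\right) !$ is a $p$-dependent constant that can be absorbed into $c_{p,d}$ in any case.
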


\begin{proof}
Define two norms on $\mathcal{G}_{\mathcal{L}}^{\left[ p\right] }$ as $%
\left\Vert a\right\Vert _{1}:=\max_{\rho \in F_{\mathcal{L}}^{\left[ p\right]
}}\left\vert \left( a,\rho \right) \right\vert ^{\frac{1}{\left\vert \rho
\right\vert }}$ and 
\begin{equation*}
\left\Vert a\right\Vert _{2}:=\max_{t_{i}\in \mathcal{B}_{\mathcal{L}}^{%
\left[ p\right] },\left\vert t_{1}\right\vert +\cdots +\left\vert
t_{n}\right\vert \leq \left[ p\right] }\left\vert \left( a,t_{1}\ast \cdots
\ast t_{n}\right) \right\vert ^{\frac{1}{\left\vert t_{1}\right\vert +\cdots
+\left\vert t_{n}\right\vert }}.
\end{equation*}%
Based on the definition of $T^{X}$ in Notation \ref{Notation TX},
equivalency of continuous homogeneous norms on $\mathcal{G}_{\mathcal{L}}^{%
\left[ p\right] }$ as in Proposition \ref{Proposition equivalency of norms}
and $\left( \bar{X}_{s,t},\rho \right) =\left( X_{s,t},\rho \right) /\sigma
\left( \rho \right) $, we have, for $l\in \mathcal{W}$, $\left\vert
l\right\vert =1,\dots ,\left[ p\right] $, with $\left\Vert
X_{s,t}\right\Vert $ defined at $\left( \ref{norm on GLN}\right) $, 
\begin{equation*}
\left\Vert \left( T_{s,t}^{X},l\right) \right\Vert ^{\frac{1}{\left\vert
l\right\vert }}\leq \left\Vert \bar{X}_{s,t}\right\Vert _{2}\leq
c_{p,d}^{1}\left\Vert \bar{X}_{s,t}\right\Vert _{1}\leq
c_{p,d}^{1}\left\Vert X_{s,t}\right\Vert \leq c_{p,d}^{1}\left\Vert
X\right\Vert _{p-var,\left[ s,t\right] }.
\end{equation*}%
Then the estimate follows from \cite[Theorem 2.2.1]{lyons1998differential}
with $c_{p,d}:=\left( c_{p,d}^{1}\beta _{p}\right) ^{p}$.
\end{proof}

\begin{proof}[Proof of Theorem \protect\ref{main theorem}]
With $K_{p}$ in Lemma \ref{Lemma bound on Tn} and $c_{p,d}$ in Lemma \ref%
{Lemma factorial decay of TX}, denote $c_{p,d}^{2}:=\left( K_{p}d\right)
^{p}\left( c_{p,d}\vee 1\right) $ and set $\tilde{\omega}:=c_{p,d}^{2}\omega 
$. Denote $Y_{t}^{w}:=F^{w}\left( y_{t}\right) $ for $w\in \mathcal{W}%
,\left\vert w\right\vert \leq N$ and $t\in \left[ 0,T\right] $.

Inductive hypothesis: fix $w\in \mathcal{W}$, $\left\vert w\right\vert \leq
N-\left[ p\right] -1$. Suppose for every $w_{1}\in \mathcal{W}$, $\left\vert
w_{1}\right\vert =\left\vert w\right\vert +1,\dots ,N$ and every $s\leq t$,%
\begin{equation*}
\left\Vert Y_{t}^{w_{1}}-Y_{s}^{w_{1}}-\sum_{l\in \mathcal{W},\left\vert
l\right\vert =1}^{N-\left\vert w_{1}\right\vert }Y_{s}^{lw_{1}}\left(
T_{s,t}^{X},l\right) \right\Vert \leq c_{p,d,\omega \left( 0,T\right) }^{1}N!%
\frac{\tilde{\omega}\left( s,t\right) ^{\frac{N+1-\left\vert
w_{1}\right\vert }{p}}}{\beta _{p}\left( \frac{N+1-\left\vert
w_{1}\right\vert }{p}\right) !}.
\end{equation*}%
The statement holds when $\left\vert w\right\vert =N-\left[ p\right] -1$
based on $\left( \ref{first statement in Lemma needed}\right) $.

Denote%
\begin{equation*}
L_{s,t}:=\sum_{l\in \mathcal{W},\left\vert l\right\vert =1}^{N-\left\vert
w\right\vert }Y_{s}^{lw}\left( T_{s,t}^{X},l\right) .
\end{equation*}%
Based on Lemma \ref{Lemma for bound of composition of vector fields}, $%
\left\Vert Y_{s}^{lw}\right\Vert \leq \left( N-1\right) !$ for $l,w\in 
\mathcal{W},\left\vert l\right\vert +\left\vert w\right\vert \leq N$.
Combined with $\left( \ref{second statement in Lemma needed}\right) $ and
Lemma \ref{Lemma factorial decay of TX},%
\begin{eqnarray*}
&&\left\Vert Y_{t}^{w}-Y_{s}^{w}-L_{s,t}\right\Vert \\
&\leq &\left\Vert Y_{t}^{w}-Y_{s}^{w}-\sum_{l\in \mathcal{W},\left\vert
l\right\vert =1}^{\left[ p\right] }Y_{s}^{lw}\left( T_{s,t}^{X},l\right)
\right\Vert +\left\Vert \sum_{l\in \mathcal{W},\left\vert l\right\vert =%
\left[ p\right] +1}^{N-\left\vert w\right\vert }Y_{s}^{lw}\left(
T_{s,t}^{X},l\right) \right\Vert \\
&\leq &C\left( p,d,\omega \left( 0,T\right) ,N\right) \omega \left(
s,t\right) ^{\frac{\left[ p\right] +1}{p}}.
\end{eqnarray*}%
Then%
\begin{equation*}
Y_{t}^{w}-Y_{s}^{w}=\lim_{\left\vert D\right\vert \rightarrow 0,D\subset 
\left[ s,t\right] }\sum_{i,t_{i}\in D}L_{t_{i},t_{i+1}}.
\end{equation*}

For $s\leq u\leq t$,%
\begin{eqnarray*}
&&L_{s,u}+L_{u,t}-L_{s,t} \\
&=&\sum_{l\in \mathcal{W},\left\vert l\right\vert =1}^{N-\left\vert
w\right\vert }Y_{s}^{lw}\left( T_{s,u}^{X},l\right) +\sum_{l\in \mathcal{W}%
,\left\vert l\right\vert =1}^{N-\left\vert w\right\vert }Y_{u}^{lw}\left(
T_{u,t}^{X},l\right) -\sum_{l\in \mathcal{W},\left\vert l\right\vert
=1}^{N-\left\vert w\right\vert }Y_{s}^{lw}\left( T_{s,t}^{X},l\right) \\
&=&\sum_{l\in \mathcal{W},\left\vert l\right\vert =1}^{N-\left\vert
w\right\vert }\left( Y_{u}^{lw}-\sum_{l_{1}\in \mathcal{W},\left\vert
l_{1}\right\vert =0}^{N-\left\vert w\right\vert -\left\vert l\right\vert
}Y_{s}^{l_{1}lw}\left( T_{s,u}^{X},l_{1}\right) \right) \left(
T_{u,t}^{X},l\right) .
\end{eqnarray*}%
Combine the inductive hypothesis and Lemma \ref{Lemma factorial decay of TX}%
, 
\begin{eqnarray*}
&&\left\Vert L_{s,u}+L_{u,t}-L_{s,t}\right\Vert \\
&\leq &c_{p,d,\omega \left( 0,T\right) }^{1}N!\sum_{n=1}^{N-\left\vert
w\right\vert }T_{n}\frac{\tilde{\omega}\left( s,u\right) ^{\frac{%
N+1-n-\left\vert w\right\vert }{p}}}{\beta _{p}\left( \frac{N+1-n-\left\vert
w\right\vert }{p}\right) !}\frac{\left( c_{p,d}\omega \left( u,t\right)
\right) ^{\frac{n}{p}}}{\beta _{p}\left( \frac{n}{p}\right) !}.
\end{eqnarray*}%
where $T_{n}$ denotes the number of elements in $\mathcal{W}$ of order $n$,
and $T_{n}\leq \left( K_{p}d\right) ^{n}$ based on Lemma \ref{Lemma bound on
Tn}.

Since $\tilde{\omega}=\left( K_{p}d\right) ^{p}\left( c_{p,d}\vee 1\right)
\omega $, based on the neo-classical inequality \cite{lyons1998differential,
hara2010fractional},%
\begin{equation*}
\left\Vert L_{s,u}+L_{u,t}-L_{s,t}\right\Vert \leq c_{p,d,\omega \left(
0,T\right) }^{1}N!\frac{p^{2}}{\left( \beta _{p}\right) ^{2}}\frac{\tilde{%
\omega}\left( s,t\right) ^{\frac{N+1-\left\vert w\right\vert }{p}}}{\left( 
\frac{N+1-\left\vert w\right\vert }{p}\right) !}
\end{equation*}%
Since $\left\vert w\right\vert \leq N-\left[ p\right] -1$, $\frac{%
N+1-\left\vert w\right\vert }{p}>\frac{\left[ p\right] +1}{p}$. Successively
dropping points similarly to the proof of \cite[Theorem 2.2.1]%
{lyons1998differential}, 
\begin{equation*}
\left\Vert Y_{t}^{w}-Y_{s}^{w}-\sum_{l\in \mathcal{W},\left\vert
l\right\vert =1}^{N-\left\vert w\right\vert }Y_{s}^{lw}\left(
T_{s,t}^{X},l\right) \right\Vert \leq c_{p,d,\omega \left( 0,T\right) }^{1}N!%
\frac{\tilde{\omega}\left( s,t\right) ^{\frac{N+1-\left\vert w\right\vert }{p%
}}}{\beta _{p}\left( \frac{N+1-\left\vert w\right\vert }{p}\right) !}.
\end{equation*}%
The induction is complete.

Let $w$ be the empty sequence. Then $\left\vert w\right\vert =0$ and $%
Y_{t}^{w}=y_{t}$. Combined with Proposition \ref{Proposition taylor
expansion y}, the proposed estimate holds.
\end{proof}

\bibliographystyle{unsrt}
\bibliography{acompat,roughpath}

\end{document}